\documentclass[10pt]{article}
\usepackage{amsmath,amsfonts,amssymb,graphicx}
\usepackage{mathtools}
\usepackage{nccmath}
\makeatletter
\newcommand*\rel@kern[1]{\kern#1\dimexpr\macc@kerna}
\newcommand*\widebar[1]{%
  \begingroup
  \def\mathaccent##1##2{%
    \rel@kern{0.8}%
    \overline{\rel@kern{-0.8}\macc@nucleus\rel@kern{0.2}}%
    \rel@kern{-0.2}%
  }%
  \macc@depth\@ne
  \let\math@bgroup\@empty \let\math@egroup\macc@set@skewchar
  \mathsurround\z@ \frozen@everymath{\mathgroup\macc@group\relax}%
  \macc@set@skewchar\relax
  \let\mathaccentV\macc@nested@a
  \macc@nested@a\relax111{#1}%
  \endgroup
}
\makeatother

\DeclareMathOperator*{\argmin}{argmin}
\usepackage{color}
\usepackage{amsthm}
\usepackage{algorithmic,algorithm}
\usepackage[ruled,algosection,algo2e,resetcount]{algorithm2e}


\usepackage{amsmath,amssymb}
\usepackage{cite}
\usepackage{mathtools}
\usepackage{tikz}
\usepackage{pgfplots}
\usepackage{pgfplotstable}
\usepackage{geometry}
\usepackage{color}
\usepackage{booktabs}
\usepackage{framed}
\pgfplotsset{compat=1.9}

\pgfplotstableset{
	every head row/.style={before row=\toprule,after row=\midrule},
	clear infinite
}

\usepackage{amsopn}




\renewcommand{\leq}{\leqslant}
\renewcommand{\geq}{\geqslant}
\renewcommand{\tilde}{\widetilde}

\usepackage{pgfplots}			
\pgfplotsset{compat=1.3}		

 \usepackage{multirow}
\usepackage{scalerel,stackengine}
\stackMath
\newcommand\reallywidehat[1]{%
\savestack{\tmpbox}{\stretchto{%
  \scaleto{%
    \scalerel*[\widthof{\ensuremath{#1}}]{\kern-.6pt\bigwedge\kern-.6pt}%
    {\rule[-\textheight/2]{1ex}{\textheight}}
  }{\textheight}%
}{0.5ex}}%
\stackon[1pt]{#1}{\tmpbox}%
}

\newcommand{\RR}{{\mathbb{R}}}

\usepackage{geometry}
\geometry{margin=1in}


\usepackage{todonotes}

\newtheorem{Prop}{Proposition}[section]
\newtheorem{theorem}{Theorem}[section]
\newtheorem{remark}[theorem]{Remark}
\newtheorem{num_example}[theorem]{Example}


\begin{document}

\bibliographystyle{siam}

\title{On the convergence of Krylov methods with low-rank truncations}

\author{
Davide Palitta\thanks{\texttt{palitta@mpi-magdeburg.mpg.de}, Research Group Computational Methods in Systems
and Control Theory (CSC),
Max Planck Institute for Dynamics of Complex Technical Systems,
Sandtorstra\ss{e} 1, 39106 Magdeburg, Germany} 
\and Patrick K\"{u}rschner\thanks{\texttt{patrick.kurschner@kuleuven.be}
Department of Electrical Engineering  (ESAT), ESAT/STADIUS, KU Leuven, 
Kasteelpark Arenberg 10, 3001 Leuven, Belgium}
}

\maketitle

\begin{abstract}{
Low-rank Krylov methods are one of the few options available in the literature to address the numerical solution of large-scale general linear matrix equations. These routines amount to well-known Krylov schemes that have been equipped with a couple of low-rank truncations to maintain a feasible storage demand in the overall solution procedure. However, such truncations may affect the convergence properties of the adopted Krylov method. In this paper we show how the truncation steps have to be performed in order to maintain the convergence of the Krylov routine. Several numerical experiments validate our theoretical findings.}

\end{abstract}
%
%
%

\section{Introduction}

We are interested in the numerical solution of general linear matrix equations of the form
\begin{equation}\label{eq_main}
 \sum_{i=1}^pA_iXB_i^T+C_1C_2^T=0,
\end{equation}
where $A_i\in\mathbb{R}^{n_A\times n_A}$, $B_i\in\mathbb{R}^{n_B\times n_B}$ are large matrices
that allow  matrix-vector products $A_iv$, $B_iw$ to be efficiently computed for all $i=1,\ldots,p$, and any $v\in\mathbb{R}^{n_A}$, $w\in\mathbb{R}^{n_B}$.
Moreover, $C_1$, $C_2$ are supposed to be of low rank, i.e., $C_1\in\mathbb{R}^{n_A\times q}$, $C_2\in\mathbb{R}^{n_B\times q}$, $q\ll n_A,n_B$. For sake of simplicity we consider the case of $n_A=n_B\equiv n$ in the following, so that the solution $X\in\mathbb{R}^{n\times n}$ is a square matrix, but our analysis can be applied to the rectangular case, with $n_A\neq n_B$, as well.

Many common linear matrix equations can be written as in~\eqref{eq_main}. For instance, if $p=2$ and $B_1=A_2=I_n$, $I_n$  identity matrix of order $n$, we get the classical Sylvester equations. 
Moreover, if $B_2=A_1$, $A_2=B_1$, and $C_1=C_2$, the Lyapunov equation is attained.
These equations are ubiquitous in signal processing and control and systems theory. See, e.g.,~\cite{Antoulas.05,Dooren1991,Benner2017}. The discretization of certain elliptic PDEs yields Lyapunov and Sylvester equations as well. See, e.g.,~\cite{Palitta2016,Breiten2016}.

\emph{Generalized} Lyapunov and Sylvester equations\footnote{We note that also for $p=2$, the equations we get when $B_1\neq I_n$, $A_2\neq I_n$ are sometimes referred to as generalized Sylvester (Lyapunov) equations. In this work the term \emph{generalized} always refers to the case $p >2$  consisting of a Lyapunov/Sylvester operator plus a linear operator.} amount to a Lyapunov/Sylvester operator plus a general linear operator:
$$ AXB^T+BXA^T+\sum_{i=1}^{p-2}N_iXN_i^T+CC^T=0,\quad\text{and}\quad A_1XB_1+A_2XB_2^T+\sum_{i=1}^{p-2}N_iXM_i^T+C_1C_2^T=0.
$$
See, e.g.,~\cite{Benner2013,Jarlebring2018}. These equations play an important role in model order reduction of bilinear and stochastic systems, see, e.g., \cite{Benner2013, Benner2011, Damm2008}, and many problems arising from the discretization of PDEs can be formulated as generalized Sylvester equations as well.
See, e.g., \cite{Palitta2016, Ringh2018,WeiBR19}.

General multiterm linear matrix equation of the form~\eqref{eq_main} have been attracting attention in the very recent literature because they arise in many applications like the discretization of deterministic and stochastic PDEs, see, e.g.,  \cite{Baumann2018,Powell2017}, PDE-constrained optimization problems~\cite{Stoll2015}, data assimilation~\cite{Freitag2018}, matrix  regression problems arising in computational neuroscience~\cite{Kuerschner2018}, fluid-structure interaction problems \cite{WeiBR19}, and many more.

Even when the coefficient matrices $A_i$'s and $B_i$'s in~\eqref{eq_main} are sparse, the solution $X$ is, in general, dense and it cannot be stored for large scale problems.
However, for particular instances of~\eqref{eq_main}, as the ones above, and under certain assumptions on the coefficient matrices,  
 a fast decay in the singular values of $X$ can be proved and, thus, the solution admits accurate low-rank approximations of the form $S_1S_2^T\approx X$, $S_1$, $S_2\in\mathbb{R}^{n\times t}$, $t\ll n$, so that only the low-rank factors $S_1$ and $S_2$ need to be computed and stored. See, e.g.,~\cite{Penzl2000,Baker2015,Benner2013,Jarlebring2018}.  
 
 For the general multiterm linear equation~\eqref{eq_main}, robust low-rank approximability properties of the solution have not been established so far even though $X$ turns out to be numerically low-rank in many cases. See, e.g.,~\cite{Stoll2015, Freitag2018}. In the rest of the paper we thus assume that the solution $X$ to~\eqref{eq_main} admits accurate low-rank approximations.
 
 The efficient computation of the low-rank factors $S_1$ and $S_2$ is the task of the so-called low-rank methods and many different algorithms
have been developed in the last decade for both generalized and standard Lyapunov and Sylvester equations. A non complete list of low-rank methods for such equations includes projection methods proposed in, e.g.,~\cite{Druskin.Simoncini.11,Simoncini2007,Jarlebring2018,Shank2015,Powell2017}, low-rank (bilinear) ADI iterations~\cite{Benner2009,Li2004, Benner2013},
sign function methods~\cite{Baur2006,Baur2008}, and Riemannian optimization methods~\cite{Kressner2016,Vandereycken2010}. We refer the reader to~\cite{Simoncini2016} for a thorough presentation of low-rank techniques.

To the best of our knowledge, few options are present in the literature for the efficient numerical solution of general equations~\eqref{eq_main}: A greedy low-rank method by Kressner and Sirkovi\'{c}~\cite{Kressner2015}, and low-rank Krylov procedures (e.g.,~\cite{Kressner2011,Stoll2015,Freitag2018,Benner2013}) which are the focus of this paper. 

Krylov methods for matrix equations can be seen as standard Krylov subspace schemes applied to the $n^2\times n^2$ linear system
\begin{equation}\label{eq:linear_system}
 \mathcal{A}\text{vec}(X)=-\text{vec}(C_1C_2^T), \quad \mathcal{A}:=\left(\sum_{i=1}^p B_i\otimes A_i\right)\in\mathbb{R}^{n^2\times n^2},
\end{equation}
where $\otimes$ denotes the Kronecker product and $\text{vec}:\mathbb{R}^{n\times n}\rightarrow \mathbb{R}^{n^2}$ is such that $\text{vec}(X)$ is the vector obtained by stacking the columns of
the matrix $X$ one on top of each other. 

These methods construct the Krylov subspace
\begin{equation}\label{eq.DefKrylovSpace}
 \mathbf{K}_m(\mathcal{A},\text{vec}(C_1C_2^T))=\text{span}\left\{\text{vec}(C_1C_2^T),
 \mathcal{A}\text{vec}(C_1C_2^T),\ldots,\mathcal{A}^{m-1}\text{vec}(C_1C_2^T)\right\},
\end{equation}
and compute an approximate solution of the form 
$\text{vec}(X_m)=V_my_m\approx \text{vec}(X)$, where $V_m=[v_1,\ldots,v_m]\in\mathbb{R}^{n^2\times m}$ has orthonormal columns and it is such that 
$\text{Range}(V_m)=\mathbf{K}_m(\mathcal{A},\text{vec}(C_1C_2^T))$ with $y_m\in\mathbb{R}^m$. The vector $y_m$ can be computed in different ways which depend on the selected Krylov method. The most common schemes are based either on a (Petrov-)Galerkin 
condition on the residual vector or a minimization 
procedure of the residual norm; see, e.g.,~\cite{Saad2003}.

The coefficient matrix $\mathcal{A}$ in~\eqref{eq:linear_system} is never assembled explicitly in the construction of $\mathbf{K}_m(\mathcal{A},\text{vec}(C_1C_2^T))$ but its
Kronecker structure is exploited to efficiently perform matrix-vector products. Moreover, to keep the memory demand low, the basis vectors of $\mathbf{K}_m(\mathcal{A},\text{vec}(C_1C_2^T))$ must 
be stored in low-rank format. To this end, the Arnoldi procedure to compute $V_m$ has to be equipped with a couple of low-rank truncation steps. In particular, a low-rank truncation is performed after
the ``matrix-vector product'' $\mathcal{A}v_m$ where $v_m$ denotes the last basis vector, and during the orthogonalization process. See, e.g.,~\cite[Section 3]{Stoll2015}, \cite[Section 2]{Kressner2011},~\cite[Section 3]{Freitag2018} and section~\ref{Low-rank FOM and GMRES}. 

In principle, the truncation steps can affect the convergence of the Krylov method and 
the well-established properties of Krylov schemes~(see, e.g., \cite{Saad2003}) may no longer hold. However, it has been numerically observed that Krylov methods with low-rank truncations often achieve the desired accuracy, 
even when the truncation strategy is particularly aggressive. See, e.g.,\cite{Stoll2015, Freitag2018}.

In this paper we establish some theoretical foundations to explain the converge of Krylov methods with low-rank truncations. In particular, the full orthogonalization method (FOM)~\cite[Section 6]{Saad2003} and the generalized minimal residual method (GMRES)~proposed in~\cite{Saad1986} are analyzed.

We assume that two different truncation steps are performed within our routine and, to show that the convergence is maintained, we interpret these truncations in two distinct ways.
First, the truncation performed after the matrix-vector product $\mathcal{A}v_m$ is seen as an inexact matrix-vector product and results coming from~\cite{Simoncini2003} are employed. Second, the low-rank truncations that take place during the orthogonalization procedure are viewed as a structured perturbation of the new basis vector that preserves orthogonality; the perturbed vector is still orthogonal with respect to the previous ones.

 We would like to underline the fact that the schemes studied in this paper significantly differ from \emph{tensorized} Krylov methods analysed in, e.g., \cite{Kressner2009/10}. Indeed, our $\mathcal{A}$ is not a \emph{Laplace-like} operator in general, i.e., $\mathcal{A}\neq\sum_{i=1}^pI_{n_1}\otimes\cdots\otimes I_{n_{i-1}}\otimes A_i\otimes I_{n_{i+1}}\otimes\cdots\otimes I_{n_p} $.

The following is a synopsis of the paper. In section~\ref{Low-rank FOM and GMRES} we review the low-rank formulation of FOM and GMRES and their convergence is proved in 
section~\ref{A converge result}. In particular, in section~\ref{Inexact matrix-vector products}-\ref{Structured perturbations of the basis} the two different interpretations of the 
low-rank truncation steps are presented. 
Some implementation aspects of these low-rank truncations are discussed in section~\ref{sec:ranktrunc}. It is well known that Krylov methods must be equipped with effective preconditioning 
techniques in order to achieve a fast convergence in terms of number of iterations. Due to some peculiar aspects of our setting, the preconditioners must be carefully designed as we discuss in
section~\ref{Preconditioning}. 
Short recurrence methods like CG, MINRES and BICGSTAB can be very appealing in our context due to their small memory requirements and low computational efforts per iteration.
Even though their analysis can be cumbersome since the computed basis is not always orthogonal (e.g., the orthogonality may be lost in finite precision arithmetic), their application to the solution of~\eqref{eq_main} is discussed in section~\ref{Short recurrence methods}.
Several numerical examples reported in section~\ref{Numerical examples} support our theoretical analysis. The paper finishes with 
some conclusions given in section~\ref{Conclusions}.

Throughout the paper we adopt the following notation.
 The matrix inner product is
defined as $\langle X, Y \rangle_F = \mbox{trace}(Y^T X)$ so that the induced norm is $\|X\|_F= \sqrt{\langle X, X\rangle_F}$. In the paper we continuously use the 
identity $\text{vec}(Y)^T\text{vec}(X)=\langle X,Y\rangle_F$ so that $\|\text{vec}(X)\|_2^2=\|X\|_F^2$. Moreover, the cyclic property of the trace operator allows for a cheap evaluation of 
matrix inner products with low-rank matrices. Indeed, if $M_i,N_i\in\mathbb{R}^{n\times r_i}$, $r_i\ll n$, $i=1,2$, 
$\langle M_1N_1^T,M_2N_2^T\rangle_F=\text{trace}(N_2M_2^TM_1N_1^T)=\text{trace}((M_2^TM_1)(N_1^TN_2))$ and only matrices of small dimensions $r_i$ are involved in such a computation. 
Therefore, even if it is not explicitly stated, we will always assume that matrix inner products with low-rank matrices are cheaply computed without assembling any dense $n\times n$ matrix. For sake of simplicity we will omit the subscript in $\|\cdot\|_F$ and write only $\|\cdot\|$.

The $k$-th singular value of a matrix $M\in\mathbb{R}^{m_1\times m_2}$ is denoted by $\sigma_k(M)$, where the singular values are assumed to be ordered in a decreasing fashion. 
The condition number of $M$ is denoted by $\kappa(M)=\sigma_1(M)/\sigma_p(M)$, $p=\mathrm{rank}(M)=\argmin_i\lbrace\sigma_i(M)\neq 0\rbrace$.

As already mentioned, $I_n$ denotes the identity matrix of order $n$ and the subscript
is omitted whenever the dimension of $I$ is clear from the context. The $i$-th canonical basis vector of $\mathbb{R}^n$ is denoted by $e_i$ while $\mathbf{0}_m$ is a vector of length $m$ whose entries are all zero. 

The brackets $[\cdot]$ are used to concatenate matrices of conforming dimensions. In particular, a Matlab-like notation is adopted and $[M,N]$ denotes the matrix obtained by stacking $M$ and $N$ one next to the other whereas $[M;N]$ the one obtained by stacking $M$ and $N$ one of top of each other, i.e., $[M;N]=[M^T,N^T]^T$.
The notation $\text{diag}(M,N)$ is used to denote the block diagonal matrix with diagonal blocks $M$ and $N$.

\section{Low-rank FOM and GMRES}\label{Low-rank FOM and GMRES}
In this section we revise the low-rank formulation of FOM (LR-FOM) and GMRES (LR-GMRES) for the solution of the multiterm matrix equation~\eqref{eq_main}.

Low-rank Krylov methods compute an approximate solution $X_m\approx X$ of the form 
\begin{equation}\label{SolutionExpression}
\text{vec}(X_m)=x_0+V_my_m. 
\end{equation}
In the following we will always assume the initial guess $x_0$ to be the zero vector $\mathbf{0}_n$ and in Remark~\ref{Remark_initialguess} such a choice is motivated. 
Therefore, the $m$ orthonormal columns of $V_m=[v_1,\ldots,v_m]\in\mathbb{R}^{n^2\times m}$ in~\eqref{SolutionExpression} span the Krylov subspace~\eqref{eq.DefKrylovSpace} and $y_m\in\mathbb{R}^m$.

One of the peculiarities of low-rank Krylov methods is that the basis vectors must be stored in low-rank format. We thus write $v_j=\text{vec}(\mathcal{V}_{1,j}\mathcal{V}_{2,j}^T)$ where $\mathcal{V}_{1,j},~\mathcal{V}_{2,j}\in\mathbb{R}^{n\times s_j}$, $s_j\ll n$, for all $j=1,\ldots,m$.

The basis $V_m$ can be computed by a reformulation of the underlying Arnoldi process~(see, e.g., \cite[Section 6.4]{Saad2003}) that exploits the Kronecker structure of $\mathcal{A}$ and the low-rank format of the basis vectors.
In particular, at the $m$-th iteration,
the $n^2$-vector $\widehat v=\mathcal{A}v_m$ must be computed. For sparse matrices $A_i,~B_i$, a naive implementation of this operation costs $\mathcal{O}(\mathtt{nnz}(\mathcal{A}))$ floating point operations (flops) where $\mathtt{nnz}(\mathcal{A})$ denotes the number of nonzero entries of $\mathcal{A}$. However,  it can be replaced by the linear combination $\widehat V=\sum_{i=1}^p \left(A_i\mathcal{V}_{1,j}\right)\left(B_i\mathcal{V}_{2,j}\right)^T$, $\text{vec}(\widehat V)=\widehat v$, where $2ps_j$ matrix-vector products with matrices of order $n$ are performed. The cost of such operation is $\mathcal{O}((\max_i\mathtt{nnz}(A_i)+\max_i\mathtt{nnz}(B_i))ps_j)$ flops and it is thus much cheaper than computing $\widehat v$ naively via the matrix-vector product by $\mathcal{A}$ since $\mathtt{nnz}(\mathcal{A})=\mathcal{O}(\max_i\mathtt{nnz}(A_i)\cdot\max_i\mathtt{nnz}(B_i))$, $s_j$ is supposed to be small and $p$ is in general moderate. A similar argumentation carries over when (some of) the matrices $A_i,~B_i$ are not sparse but still allow efficient matrix vector products.

 Moreover, since 
$$\widehat V=\sum_{i=1}^p \left(A_i\mathcal{V}_{1,j}\right)\left(B_i\mathcal{V}_{2,j}\right)^T=[A_1\mathcal{V}_{1,j},\ldots,A_p\mathcal{V}_{1,j}][B_1\mathcal{V}_{2,j},\ldots,B_p\mathcal{V}_{2,j}]^T=\widehat V_1 \widehat V_2^T,\quad \widehat V_1,\widehat V_2\in\mathbb{R}^{n\times ps_j},$$
the low-rank format is preserved in the computation of $\widehat V$.
In order to avoid an excessive increment in the column dimensions $ps_j$ of $\widehat V_1,\widehat V_2$, it is necessary to exercise a column compression of the factors $\widehat V_1$ and $\widehat V_2$, i.e., the matrices $(\widebar V_1, \widebar V_2)=\mathtt{trunc}(\widehat V_1,I,\widehat V_2,\varepsilon_{\mathcal{A}})$ are computed. With $\mathtt{trunc}(L,M,N,\varepsilon_{\text{trunc}})$ we denote any routine that computes low-rank approximations of the product $LMN^T$ with a desired accuracy of order $\varepsilon_{\mathtt{trunc}}$, so that, the matrices $\widebar V_1$, $\widebar V_2$ are such that $\|\widebar V_1 \widebar V_2^T- \widehat V_1 \widehat V_2^T\|/\|\widehat V_1 \widehat V_2^T\|=\varepsilon_{\mathcal{A}}$ with
$\widebar V_1$, $\widebar V_2\in\mathbb{R}^{n\times \widebar s}$, $\widebar s\leq ps_j$. Algorithm~\ref{trunc_routine1.1} illustrates a standard approach for such compressions that is based on thin QR-factorizations and a SVD thereafter; see, e.g.,~\cite[Section~2.2.1]{Kressner2011}, and used in the remainder of the paper. Some alternative truncation schemes are discussed in section~\ref{sec:ranktrunc}.

\begin{algorithm}
\caption{$\mathtt{trunc}(L,M,N,\varepsilon_{\mathtt{trunc}})$}
\label{trunc_routine1.1}
\SetKwInOut{Input}{input}
\SetKwInOut{Output}{output}
\Input{$L,N\in\mathbb{R}^{n\times r},$ $r\ll n$, $M\in\mathbb{R}^{r\times r}$, $\varepsilon_{\text{trunc}}>0$}
\Output{$F,G\in\mathbb{R}^{n\times \widebar k}$, $\widebar k\leq r$, $\|FG^T-LMN^T\|/\|LMN^T\|=\varepsilon_{\mathtt{trunc}}$}
\BlankLine
\nl Compute skinny QR factorizations $Q_LR_L=L$, $Q_NR_N=N$\\
\nl Compute the SVD decomposition $U\Sigma W^T=R_LMR_N^T\in\mathbb{R}^{r\times r}$, $U=[u_1,\ldots,u_r]$, $W=[w_1,\ldots,w_r]$ $\Sigma=\text{diag}(\sigma_1,\ldots,\sigma_r)$, $\sigma_1\geq\cdots\geq\sigma_r\geq0$\\
\nl Find the smallest index $\widebar k$ such that $\sqrt{\sum_{i=\widebar k+1}^r\sigma_i^2}\leq\varepsilon_{\text{trunc}}\|\Sigma\|$\\
\nl Define $F:=Q_L([u_1,\ldots,u_{\widebar k}] \sqrt{\text{diag}(\sigma_1,\ldots,\sigma_{\widebar k})})$ and $G:=Q_N([w_1,\ldots,w_{\widebar k}] \sqrt{\text{diag}(\sigma_1,\ldots,\sigma_{\widebar k})})$
\end{algorithm}


The vector $\text{vec}(\widebar V_1\widebar V_2^T)\approx \widehat v$ returned by the truncation algorithm is then orthogonalized with respect to the previous basis vectors $\text{vec}(\mathcal{V}_{1,j}\mathcal{V}_{1,j}^T)$, $j=1,\ldots,m$. 
Such an orthogonalization step can be implemented by performing, e.g., the modified Gram-Schmidt procedure and the low-rank format of the quantities involved can be exploited and maintained in the result. The vector formulation of the orthogonalization step is given by
\begin{equation}\label{GS_vector}
 \widetilde v= \text{vec}(\widebar V_1\widebar V_2^T)-\sum_{j=1}^m\left(\text{vec}(\mathcal V_{1,j}\mathcal V_{2,j}^T)^T\text{vec}(\widebar V_1\widebar V_2^T)\right)\text{vec}(\mathcal V_{1,j}\mathcal V_{2,j}^T),
\end{equation}
and, since $\text{vec}(\mathcal V_{1,j}\mathcal V_{2,j}^T)^T\text{vec}(\widebar V_1\widebar V_2^T)=\langle \mathcal V_{1,j}\mathcal V_{2,j}^T,\widebar V_1\widebar V_2^T\rangle_F$, we can reformulate~\eqref{GS_vector} as
$$\widetilde V=\widebar V_1\widebar V_2^T-\sum_{j=1}^mh_{j,m}\mathcal V_{1,j}\mathcal V_{2,j}^T=[\widebar V_1,\mathcal V_{1,1},\ldots,\mathcal V_{1,m}]\Theta_m[\widebar V_2,\mathcal V_{2,1},\ldots,\mathcal V_{2,m}]^T,\quad h_{j,m}=\langle \mathcal V_{1,j}\mathcal V_{2,j}^T,\widebar V_1\widebar V_2^T\rangle_F,$$
where $\Theta_m=\text{diag}(I_{\widebar s},-h_{1,m}I_{s_1},\ldots,-h_{m,m}I_{ s_m})$, $\text{vec}(\widetilde V)=\widetilde v$, and the $m$ coefficients $h_{j,m}$ are collected in the $m$-th column of an upper Hessenberg matrix $H_m\in\mathbb{R}^{m\times m}$. 
Obviously, the result $\widetilde V$ has factors with increased column dimensions such that a truncation of
 the matrix 
$[\widebar V_1,\mathcal V_{1,1},\ldots,\mathcal V_{1,m}]\Theta_m[\widebar V_2,\mathcal V_{2,1},\ldots,\mathcal V_{2,m}]^T$ becomes necessary. In particular, if $\varepsilon_{\mathtt{orth}}$ is a given threshold, we compute 
\begin{equation}\label{truncation_orth}
   (\widetilde V_{1},\widetilde V_{2})=\mathtt{trunc}([\widebar V_1,\mathcal V_{1,1},\ldots,\mathcal V_{1,m}],\Theta_m,[\widebar V_2,\mathcal V_{2,1},\ldots,\mathcal V_{2,m}],\varepsilon_{\mathtt{orth}}).
\end{equation}
The result in~\eqref{truncation_orth} is then normalized to obtained the $(m+1)$-th basis vector, namely $\mathcal{V}_{1,m+1}=\widetilde V_{1}/\sqrt{\|\widetilde V_{1}\widetilde V_{2}^T\|}$ and 
$\mathcal{V}_{2,m+1}=\widetilde V_{2}/\sqrt{\|\widetilde V_{1}\widetilde V_{2}^{T}\|}$.
%
%
The upper Hessenberg matrix $\underline{H}_m\in\mathbb{R}^{(m+1)\times m}$ is defined such that its square principal submatrix is given by $H_m$ and $e_{m+1}^T \underline{H}_me_m=h_{m+1,m}:=\|\widetilde V_{1}\widetilde V_{2}^T\|$.

The difference between FOM and GMRES lies in the computation of the vector $y_m$ in~\eqref{SolutionExpression}. In FOM a Galerkin condition on the residual vector 
\begin{equation}\label{Galerkin}
\text{vec}(C_1C_2^T)+\mathcal{A}\text{vec}(X_m) \perp \mathbf{K}_m(\mathcal{A},\text{vec}(C_1C_2^T)). 
\end{equation}
is imposed.
If no truncation steps are performed during the Arnoldi procedure, the Arnoldi relation 
\begin{equation}\label{exact_Arnoldi}
 \mathcal{A}V_m=H_mV_m+h_{m+1,m}\text{vec}(\mathcal{V}_{1,m+1}\mathcal{V}_{2,m+1}^T)e_m^T,
\end{equation}
is fulfilled and it is easy to show that imposing the Galerkin condition~\eqref{Galerkin} is equivalent to solving the $m\times m$ linear system 
\begin{equation}\label{FOM_linearsystem}
H_my_m^{fom}=\beta e_1, \quad\beta=\|C_1C_2^T\|,
\end{equation}
for $y_m=y_m^{fom}$. Moreover, in the exact setting where~\eqref{exact_Arnoldi} holds, the norm of the residual vector $\text{vec}(C_1C_2^T)+\mathcal{A}\text{vec}(X_m)$ can be cheaply computed as
$$\|\text{vec}(C_1C_2^T)+\mathcal{A}\text{vec}(X_m)\|=h_{m+1,m}|e_m^Ty^{fom}_m|.$$
See, e.g.,~\cite[Proposition 6.7]{Saad2003}. We show later that this is possible also when the low-rank truncations are performed and an \emph{inexact} version of~\eqref{exact_Arnoldi} is taken into account.

In GMRES, the vector $y_m=y_m^{gm}$ is computed by solving a least squares problem 
\begin{equation*}
y_m^{gm}=\argmin_{y_m}\|\text{vec}(C_1C_2^T)+\mathcal{A}V_my_m\|,
\end{equation*}
which corresponds to the Petrov-Galerkin orthogonality condition
\begin{equation}\label{MresGalerkin}
\text{vec}(C_1C_2^T)+\mathcal{A}\text{vec}(X_m) \perp \mathcal{A}\cdot\mathbf{K}_m(\mathcal{A},\text{vec}(C_1C_2^T)). 
\end{equation}
If~\eqref{exact_Arnoldi} holds, $y_m^{gm}$ can be computed as 
\begin{equation}\label{least_squares_GMRES}
y_m^{gm}=\argmin_{y_m}\|\beta e_1+\underline{H}_my_m\|,
\end{equation}
 and, following the discussion in~\cite[Section 6.5.3]{Saad2003}, this reduced least squares problem can be cheaply solved by applying $m$ Givens rotations $\Omega_i$. If $\underline{U}_m=\prod_{i=1}^m\Omega_i\underline{H}_m\in\mathbb{R}^{(m+1)\times m}$ is upper triangular and $\underline{g}_m=\beta\prod_{i=1}^m\Omega_ie_1\in\mathbb{R}^{m+1}$, then the vector $y_m^{gm}$  is given by the solution of the $m\times m$ linear system $U_my_m^{gm}=g_m$ where $U_m$ denotes the square principal submatrix of $\underline{U}_m$
and $g_m$ collects the first $m$ components of $\underline g_m$. Moreover,
$$\|\text{vec}(C_1C_2^T)+\mathcal{A}\text{vec}(X_m)\|=|e_{m+1}^T\underline{g}_m|.$$
See, e.g.,~\cite[Proposition 6.9]{Saad2003}. As for FOM, we will show that this is possible also in the case of GMRES
equipped with low-rank truncations.

If at the $m$-th iteration the residual norm $\|\text{vec}(C_1C_2^T)+\mathcal{A}V_my_m\|$ is sufficiently small\footnote{$y_m=y_m^{fom}$ or $y_m=y_m^{gm}$.}, we recover the solution $X_m$. 
Clearly, the full $X_m$ is not constructed explicitly as this is a large, dense matrix. However, since we have assumed that the solution $X$ to~\eqref{eq_main} admits accurate low-rank approximations, 
we can compute low-rank factors $S_1$, $S_2\in\mathbb{R}^{n\times t}$, $t\ll n$, such that $S_1S_2^T\approx X$. Also this operation can be performed by exploiting the low-rank format of the basis vectors. In particular, if 
$\Upsilon=\text{diag}((e_1^Ty_m)I_{ s_1},\ldots,(e_m^Ty_m)I_{ s_m})$, then
\begin{equation}\label{recover_Xm}
 (S_{1}, S_{2})=\mathtt{trunc}([\mathcal{V}_{1,1}, \ldots \mathcal{V}_{1,m}],\Upsilon,[\mathcal{V}_{1,2}, \ldots \mathcal{V}_{2,m}],\varepsilon).
 \end{equation}
%

The low-rank FOM and GMRES procedures are summarized in
Algorithm~\ref{LR_FOM}. For sake of simplicity, we decide to collect the two routines in the same pseudo-algorithm as they differ only in the convergence check if a Givens rotations approach similar to the one presented for GMRES is adopted also for FOM. This allows for a cheap evaluation of the residual norm without solving the linear system~\eqref{FOM_linearsystem} at each iteration. 


\setcounter{AlgoLine}{0}
\begin{algorithm}
\caption{LR-FOM and LR-GMRES\label{LR_FOM}}
\SetKwInOut{Input}{input}\SetKwInOut{Output}{output}
\Input{$A_i,B_i\in\mathbb{R}^{n\times n},$ for $i=1,\ldots,p$, $C_1,C_2\in\mathbb{R}^{q\times n}$, $m_{\max}$, $\varepsilon_{\mathcal{A}}$, $\varepsilon_{\mathtt{orth}}$, $\varepsilon>0$}
\Output{$S_1,S_2\in\mathbb{R}^{n\times t}$, $t\ll n$, $S_1S_2^T\approx X$ approximate solution to~\eqref{eq_main}}
\BlankLine
\nl Compute $\beta=\|C_1C_2^T\|$ and set $\Omega_1=1$, $\underline{g}_1=\beta e_1$, $\mathcal{V}_{1,1}= C_1/\sqrt{\beta}$ and
  $\mathcal{V}_{2,1}= C_2/\sqrt{\beta}$\\   \For{$m=1, 2,\dots,$ till $m_{\max}$}{
  \nl Set $\widehat V_1=[A_1\mathcal{V}_{1,m},\ldots, A_p\mathcal{V}_{1,m}]$ and $\widehat V_2=[B_1\mathcal{V}_{2,m},\ldots, B_p\mathcal{V}_{2,m}]$\label{alg_line_product}\\
  \nl Compute $(\widebar V_1, \widebar V_2)=\mathtt{trunc}(\widehat V_1,I,\widehat V_2,\varepsilon_{\mathcal{A}})$ \label{algo_firsttrunc}\\
  \nl Set $h_{j,m}=0$ for $j=1,\ldots,m$ \\
  \For{$\ell=1,2$}{
  \For{$j=1,\ldots,m$}{
  \nl Compute $h_{j,m}=h_{j,m}+\langle \mathcal{V}_{1,j}\mathcal{V}_{2,j}^T,\widebar V_1\widebar V_2^T\rangle_F$ and collect it in $\underline{H}_me_m\in\RR^{m+1}$  
  }
  \nl Set $\Theta_m=\text{diag}(I_{\widebar s},-h_{1,m}I_{s_1},\ldots,-h_{m,m}I_{ s_m})$, $\widebar s=\text{rank}(\widebar V_1)$\\
  \nl Compute $(\widebar V_{1},\widebar V_{2})=\mathtt{trunc}([\widebar V_1,\mathcal V_{1,1},\ldots,\mathcal V_{1,m}],\Theta_m,[\widebar V_2,\mathcal V_{2,1},\ldots,\mathcal V_{2,m}],\varepsilon_{\mathtt{orth}})$
  \label{algo_secondtrunc}}
  \nl Set $e_{m+1}^T\underline{H}_me_m=h_{m+1,m}=\|\widebar V_1\widebar V_2^T\|$\\
  \nl Set $\mathcal{V}_{1,m+1}=\widebar V_1/\sqrt{h_{m+1,m}}$ and
  $\mathcal{V}_{2,m+1}=\widebar V_2/\sqrt{h_{m+1,m}}$\\  
  \eIf{$m=1$}{
  \nl Set  $\underline{U}_1=\text{diag}(\Omega_1,1) \underline{H}_1e_1$ 
  }{
   \nl Set $\underline{U}_m=[[\underline{U}_{m-1};\mathbf{0}_m^T],\prod_{i=1}^m\text{diag}(\Omega_i,I_{m+1-i}) \underline{H}_me_m]$
   }
  \If{{\rm \textbf{FOM} and} $|h_{m+1,m}(e_m^T\underline{g}_m)/(e_m^T\underline{U}_me_m)|<\varepsilon\cdot\beta$}{ 
\nl \textbf{Break} and go to \textbf{\ref{algo_line_afterloop}} }
\nl Compute $\Omega_{m+1}\in\mathbb{R}^{(m+1)\times(m+1)}$ such that $\underline{U}_m=\Omega_{m+1}\underline{U}_m$ is upper triangular \\
\nl Set $\underline{g}_{m+1}=\text{diag}(\Omega_{m+1},1)[\underline{g}_m;0]$
\label{gmres_rescomp}\\
 \If{{\rm \textbf{GMRES} and} $|e_{m+1}^T\underline{g}_{m+1}|<\varepsilon\cdot\beta$}{ 
\nl \textbf{Break} and go to \textbf{\ref{algo_line_afterloop}} }

  }
  
  \nl Set $U_m=[I_m,\mathbf{0}_m]\underline{U}_m[I_m;\mathbf{0}_m^T]\in\mathbb{R}^{m\times m}$ and $g_m=[I_m,\mathbf{0}_m]\underline{g}_m\in\mathbb{R}^m$\label{algo_line_afterloop}\\
  \nl Compute $y_m=U_m^{-1}g_m$ \\
  \nl Set $\Upsilon=\text{diag}((e_1^Ty_m)I_{ s_1},\ldots,(e_m^Ty_m)I_{ s_m})$
  \\
  \nl Compute $(S_{1}, S_{2})=\mathtt{trunc}([\mathcal{V}_{1,1}, \ldots \mathcal{V}_{1,m}],\Upsilon,[\mathcal{V}_{1,2}, \ldots \mathcal{V}_{2,m}],\varepsilon)$ \label{algo_lasttrunc}
\end{algorithm}

At each iteration step $m$ of Algorithm~\ref{LR_FOM} we perform three low-rank truncations\footnote{One after the application of $\mathcal{A}$ in line~\ref{algo_firsttrunc}, and two during the orthogonalization procedure in line~\ref{algo_secondtrunc}, at the end of each of the two loops of the modified Gram-Schmidt method.} and these operations  
substantially influence the overall solution procedure. If the truncation tolerances $\varepsilon_{\mathcal{A}}$ and $\varepsilon_{\mathtt{orth}}$ are chosen too large, the whole Krylov method my break down. Therefore, in the following sections we discuss how to adaptively choose the truncation tolerances $\varepsilon_{\mathcal{A}}$ and $\varepsilon_{\mathtt{orth}}$ to maintain convergence.
Moreover, the low-rank truncation does have its own computational workload which can be remarkable, especially if the ranks of the basis vectors involved is quite large. 
In section~\ref{sec:ranktrunc} we discuss some computational appealing alternatives to Algorithm~\ref{trunc_routine1.1}.

\section{A convergence result}\label{A converge result}
In this section we show that the convergence of LR-FOM and LR-GMRES is guaranteed if the thresholds $\varepsilon_{\mathcal{A}}$ and $\varepsilon_{\mathtt{orth}}$ for the low-rank truncations in line~\ref{algo_firsttrunc} 
and~\ref{algo_secondtrunc} of Algorithm~\ref{LR_FOM} are properly chosen and if the routine used in the truncation steps satisfies certain properties.

The truncation that takes place in line~\ref{algo_lasttrunc}, after the iterative process terminated, to recover the low-rank factors of the approximate solution is not discussed. Indeed, this does not affect the convergence of the Krylov method and it is justified by assuming that the 
exact solution $X$ admits low-rank approximations.

\subsection{Inexact matrix-vector products}\label{Inexact matrix-vector products}
We start by analyzing the truncation step in line~\ref{algo_firsttrunc} of Algorithm~\ref{LR_FOM} assuming, for the moment, that the one in line~\ref{algo_secondtrunc} is not performed. In this way the generated basis $V_m$ is ensured to be orthogonal. 
In section~\ref{Structured perturbations of the basis} we will show that the truncation in line~\ref{algo_secondtrunc} of Algorithm~\ref{LR_FOM} preserves the orthogonality of the constructed basis so that the
results we show here still hold.

The low-rank truncation performed in line~\ref{algo_firsttrunc} of Algorithm~\ref{LR_FOM} can be understood as an inexact matrix-vector product with $\mathcal{A}$. Indeed, at the $m$-th iteration,
we can write
$$\widehat V_1\widehat V_2^T=\widebar V_1\widebar V_2^T+E_m,$$
where $E_m$ is the matrix discarded when $\mathtt{trunc}(\widehat V_1,I,\widehat V_2,\varepsilon_{\mathcal{A}})$ is applied so that $\|E_m\|/\|\widehat V_1\widehat V_2^T\|\leq\varepsilon_{\mathcal{A}}$.
Therefore, we have
$$\text{vec}(\widebar V_1\widebar V_2^T)=\mathcal{A}\text{vec}(\mathcal{V}_{1,m}\mathcal{V}_{2,m}^T)-\text{vec}(E_m),\quad \|\text{vec}(E_m)\|\leq \varepsilon_{\mathcal{A}}\cdot 
\|\mathcal{A}\text{vec}(\mathcal{V}_{1,m}\mathcal{V}_{2,m}^T)\|,$$
and the vector $\text{vec}(\widebar V_1\widebar V_2^T)$ can thus be seen as the result of an inexact matrix-vector product by $\mathcal{A}$.

Following the discussion in~\cite{Simoncini2003}, the Arnoldi relation~\eqref{exact_Arnoldi} must be replaced with the inexact counterpart
\begin{equation}\label{inexact_Arnoldi}
 \mathcal{A}V_m-[\text{vec}(E_1),\ldots,\text{vec}(E_m)]=V_{m}H_m+h_{m+1,m}\text{vec}(\mathcal{V}_{1,m+1}\mathcal{V}_{2,m+1}^T)e_m^T,
\end{equation}
and $\text{Range}(V_m)$ is no longer a Krylov subspace generated by $\mathcal{A}$. 

The vectors $y_m^{fom}$ and $y_m^{gm}$ can be still calculated as in~\eqref{FOM_linearsystem} 
and~\eqref{least_squares_GMRES}, respectively, but these are no longer equivalent to imposing the Galerkin and Petrov-Galerkin conditions~\eqref{Galerkin}-\eqref{MresGalerkin} since the Arnoldi 
relation~\eqref{exact_Arnoldi} 
no longer holds; different constraints must be taken into account. 
\begin{Prop}[See~\cite{Simoncini2003}]\label{Prop_constraints}
Let~\eqref{inexact_Arnoldi} hold and define $W_m=\mathcal{A}V_m-[\text{vec}(E_1),\ldots,\text{vec}(E_m)]$. If $y_m^{gm}$ is computed as in~\eqref{least_squares_GMRES}, then $q_m^{gm}:=W_my_m^{gm}$ is such that
$$ q_m=\argmin_{q\in\text{Range}(W_m)}\|\text{vec}(C_1C_2^T)+q\|.$$
Similarly, if $y_m^{fom}$ is computed as in~\eqref{FOM_linearsystem}, then $q_m^{fom}:=W_my_m^{fom}$ is such that
$$\text{vec}(C_1C_2^T)+q_m\perp \text{Range}(V_m).$$
\end{Prop}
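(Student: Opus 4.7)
The plan is to read the inexact Arnoldi identity~\eqref{inexact_Arnoldi} as a compact factorisation
$$ W_m \;=\; V_{m+1}\underline{H}_m, \qquad V_{m+1}:=[V_m,\;\text{vec}(\mathcal{V}_{1,m+1}\mathcal{V}_{2,m+1}^T)],$$
so that everything about the residual is pushed into the $(m{+}1)$-dimensional ``coordinate'' space. Under the working hypothesis of this subsection the truncation in line~\ref{algo_secondtrunc} is switched off, so $V_{m+1}$ has orthonormal columns; this is the single ingredient that makes the usual FOM/GMRES bookkeeping survive the perturbation of the Arnoldi relation. I would also record the two identities that will be used repeatedly: $\text{vec}(C_1C_2^T)=\beta v_1 = \beta V_{m+1}e_1$ (by the normalisation in line 1 of Algorithm~\ref{LR_FOM}) and $V_m^TV_{m+1}=[I_m,\mathbf{0}_m]$.

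For the GMRES claim, the plan is to parameterise $q\in\text{Range}(W_m)$ as $q=W_my$ with $y\in\mathbb{R}^m$ and compute
$$\|\text{vec}(C_1C_2^T)+W_my\|=\|V_{m+1}(\beta e_1+\underline{H}_my)\|=\|\beta e_1+\underline{H}_my\|,$$
using the orthonormality of $V_{m+1}$ in the second equality. Minimising the left-hand side over $q\in\text{Range}(W_m)$ is therefore equivalent to minimising the right-hand side over $y\in\mathbb{R}^m$, which is precisely the reduced least-squares problem~\eqref{least_squares_GMRES}. Its unique minimiser is $y_m^{gm}$, so $q_m^{gm}=W_my_m^{gm}$ attains the minimum, as asserted. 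For the FOM claim, I would project directly:
$$V_m^T\bigl(\text{vec}(C_1C_2^T)+W_my_m^{fom}\bigr)=\beta e_1+V_m^TV_{m+1}\underline{H}_my_m^{fom}=\beta e_1+H_my_m^{fom},$$
which vanishes by the defining equation~\eqref{FOM_linearsystem} (up to the paper's sign convention), yielding orthogonality to $\text{Range}(V_m)$.

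The computations themselves are essentially one-liners once the factorisation $W_m=V_{m+1}\underline{H}_m$ and the orthonormality of $V_{m+1}$ are in hand; the real obstacle is not in this proposition but in justifying that $V_{m+1}$ remains orthonormal in the presence of the second truncation of line~\ref{algo_secondtrunc}, which is the business of section~\ref{Structured perturbations of the basis}. Within the current proposition the only point where care is needed is to keep the truncation errors $[\text{vec}(E_1),\ldots,\text{vec}(E_m)]$ absorbed inside $W_m$ throughout, and never to replace $W_my$ by $\mathcal{A}V_my$: it is because $W_m$ (and not $\mathcal{A}V_m$) factors through the orthonormal $V_{m+1}$ that the reduction to the $(m{+}1)$-dimensional problem closes.
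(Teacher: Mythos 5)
Your proposal is correct and follows exactly the route the paper relies on: the paper gives no proof of its own but defers to~\cite{Simoncini2003} (Propositions 3.2--3.3 there), whose argument is precisely the factorisation $W_m=V_{m+1}\underline{H}_m$ from~\eqref{inexact_Arnoldi} combined with the orthonormality of $V_{m+1}$, reducing both claims to the small problems~\eqref{least_squares_GMRES} and~\eqref{FOM_linearsystem}. Your remark about the sign convention is apt (the paper's $H_my_m^{fom}=\beta e_1$ versus the Galerkin condition as literally stated is an inherited sloppiness, not a flaw in your argument), and your closing observation that the real burden lies in preserving orthonormality under the second truncation correctly identifies where the paper's own contribution (Proposition~\ref{Prop_orthBasis}) enters.
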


Consequently, $H_m$ is not a true Galerkin projection of $\mathcal{A}$ onto Range$(V_m)$.
One may want to compute the vectors $y_m^{fom}$ and $y_m^{gm}$ by employing the true projection $T_m:=V_m^T\mathcal{A}V_m=H_m+V_m^T[\text{vec}(E_1),\ldots,\text{vec}(E_m)]$ in place of $H_m$ in ~\eqref{FOM_linearsystem}-\eqref{least_squares_GMRES} so that the reduced problems represent a better approximation (cf.~\cite{Gue10}) of the original equation and the orthogonality conditions imposed are in terms of the true residual.
However, the computation of $T_m$ requires to store the matrix $[\text{vec}(E_1),\ldots,\text{vec}(E_m)]$ and this is impracticable as the benefits in terms of memory demand coming from the low-rank truncations are completely lost due to the allocation of both $V_m$ and $[\text{vec}(E_1),\ldots,\text{vec}(E_m)]$. 
A different option is to store the matrix $\mathcal{A}V_m$ and compute an explicit projection of $\mathcal{A}$ onto the current subspace, but also this strategy leads to an unfeasible increment in the memory requirements of the overall solution process as the storage demand grows of a factor $p$. Therefore, in all the numerical experiments reported in section~\ref{Numerical examples},
the matrix $H_m$ arising from the orthonormalization procedure is employed in the computation of $y_m^{fom}$ and  $y_m^{gm}$.

If~\eqref{inexact_Arnoldi} holds and $\text{vec}(X_m)=V_my_m$ is the approximate solution to~\eqref{eq:linear_system} computed by projection onto $\text{Range}(V_m)$, then, at the $m$-th iteration,
the true residual vector can be expressed as
\begin{equation}
 r_m=\text{vec}(C_1C_2^T)+\mathcal{A}\text{vec}(X_m)=\text{vec}(C_1C_2^T)+\mathcal{A}V_my_m=\widetilde r_m-[\text{vec}(E_1),\ldots,\text{vec}(E_m)]y_m,
\end{equation}
where $\widetilde r_m$ is the computed residual vector.

In~\cite[Section 4]{Simoncini2003} it has been shown that the residual gap $\delta_m:=\|r_m-\widetilde r_m\|$ between the true residual and the computed one can be bounded by
$$\delta_m\leq \sum_{j=1}^m \|E_j\|\cdot|e_j^Ty_m|.$$
Since $|e_j^Ty_m|$ decreases as the the iterations proceed~(see, e.g., \cite[Lemma 5.1-5.2]{Simoncini2003}), $\| E_m\|$ is allowed to increase while still maintaining a small residual gap and preserving the convergence of the overall 
solution process. This phenomenon is often referred to as \textit{relaxation}.
\begin{theorem}[See~\cite{Simoncini2003}]\label{Theorem_inexactProd}
 Let $\varepsilon>0$ and let $r_m^{gm}:=\text{vec}(C_1C_2^T)+\mathcal{A}V_my_m^{gm}$ be the true 
GMRES residual after $m$ iterations of the inexact Arnoldi procedure. If for every $k\leq m$,
 \begin{equation}\label{GMRES_bound_inexact1}
  \|E_k\|\leq \frac{\sigma_m(\underline{H}_m)}{m}\frac{1}{\|\widetilde r_{k-1}^{gm}\|}\varepsilon,
 \end{equation}
then $\|r_m^{gm}-\widetilde r_m^{gm}\|\leq\varepsilon$. Moreover, if 
\begin{equation}\label{GMRES_bound_inexact2}
  \|E_k\|\leq \frac{1}{m\kappa(\underline{H}_m)}\frac{1}{\|\widetilde r_{k-1}^{gm}\|}\varepsilon,
 \end{equation}
then $\|(V_{m+1}\underline{H}_m)^Tr_m^{gm}\|\leq\varepsilon$.

Similarly, if $r_m^{fom}:=\text{vec}(C_1C_2^T)+\mathcal{A}V_my_m^{fom}$ is the true FOM residual after $m$ iterations of the inexact Arnoldi procedure, and if for every $k\leq m$,
 \begin{equation}\label{FOM_bound_inexact}
  \|E_k\|\leq \frac{\sigma_m(H_m)}{m}\frac{1}{\|\widetilde r_{k-1}^{gm}\|}\varepsilon,
 \end{equation}
then $\|r_m^{fom}-\widetilde r_m^{fom}\|\leq\varepsilon$ and $\|V_{m}^Tr_m^{fom}\|\leq\varepsilon$.
\end{theorem}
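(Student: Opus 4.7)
The plan is to adapt the Simoncini--Szyld (2003) analysis of inexact Krylov methods verbatim to the present matrix-equation setting; essentially the whole statement reduces to the residual-gap bound already recalled in the excerpt, combined with a technical lemma that controls the size of the components of $y_m$. From the inexact Arnoldi relation~\eqref{inexact_Arnoldi} and the definitions of the true and computed residuals one obtains
\[
r_m - \widetilde r_m = -[\text{vec}(E_1),\ldots,\text{vec}(E_m)]\,y_m,\qquad \|r_m-\widetilde r_m\|\leq \sum_{j=1}^m \|E_j\|\,|e_j^Ty_m|,
\]
so the entire task is to control $|e_j^Ty_m|$ in the two methods and plug the hypothesis on $\|E_k\|$ into this sum.

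The key auxiliary estimate, imported from~\cite[Lem.~5.1--5.2]{Simoncini2003}, is
\[
|e_j^Ty_m^{gm}|\leq \frac{\|\widetilde r_{j-1}^{gm}\|}{\sigma_m(\underline{H}_m)},\qquad |e_j^Ty_m^{fom}|\leq \frac{\|\widetilde r_{j-1}^{fom}\|}{\sigma_m(H_m)}.
\]
The GMRES bound comes from the normal equations $\underline{H}_m^T\underline{H}_my_m^{gm}=-\beta\underline{H}_m^Te_1$ together with the observation that the $j$-th component of $y_m^{gm}$ equals the corresponding component of the least-squares solution restricted to the first $j$ columns, whose norm is at most $\|\widetilde r_{j-1}^{gm}\|/\sigma_m(\underline H_m)$; the FOM bound follows analogously from $H_my_m^{fom}=\beta e_1$. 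This is the step that requires the most care and is really the only nontrivial ingredient of the whole proof.

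Once these inequalities are available, each of the three claimed implications is a one-line calculation. For the first GMRES claim I insert~\eqref{GMRES_bound_inexact1} into the residual-gap sum:
\[
\|r_m^{gm}-\widetilde r_m^{gm}\|\leq \sum_{j=1}^m \frac{\sigma_m(\underline{H}_m)}{m}\,\frac{\varepsilon}{\|\widetilde r_{j-1}^{gm}\|}\cdot\frac{\|\widetilde r_{j-1}^{gm}\|}{\sigma_m(\underline{H}_m)}=\varepsilon,
\]
and the FOM counterpart under~\eqref{FOM_bound_inexact} is identical with $\underline{H}_m$ replaced by $H_m$. For the FOM orthogonality claim I use that the computed FOM residual satisfies $V_m^T\widetilde r_m^{fom}=0$ (Galerkin condition imposed by~\eqref{FOM_linearsystem}), hence $V_m^Tr_m^{fom}=V_m^T(r_m^{fom}-\widetilde r_m^{fom})$ and $\|V_m\|=1$ gives $\|V_m^Tr_m^{fom}\|\leq \varepsilon$.

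The second GMRES claim requires one extra step. The computed GMRES residual is Petrov--Galerkin orthogonal to $\mathcal{A}\cdot\mathrm{Range}(V_m)$, which in the inexact setting translates to $(V_{m+1}\underline{H}_m)^T\widetilde r_m^{gm}=0$ (normal equations of~\eqref{least_squares_GMRES}), so
\[
\|(V_{m+1}\underline{H}_m)^Tr_m^{gm}\|=\|(V_{m+1}\underline{H}_m)^T(r_m^{gm}-\widetilde r_m^{gm})\|\leq \|\underline{H}_m\|\,\|r_m^{gm}-\widetilde r_m^{gm}\|.
\]
Running the previous calculation under the tighter hypothesis~\eqref{GMRES_bound_inexact2}, which is exactly~\eqref{GMRES_bound_inexact1} scaled down by $\|\underline{H}_m\|=\sigma_m(\underline{H}_m)\kappa(\underline{H}_m)/\sigma_m(\underline{H}_m)$, yields $\|r_m^{gm}-\widetilde r_m^{gm}\|\leq \varepsilon/\|\underline{H}_m\|$, and the factor $\|\underline H_m\|$ cancels to leave $\varepsilon$. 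The main obstacle is thus not in this proof itself but in establishing the component bound on $|e_j^Ty_m|$; once that is cited, the rest is bookkeeping.
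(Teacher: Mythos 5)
The paper gives no proof of this theorem; it is imported wholesale from \cite{Simoncini2003}, and your reconstruction follows exactly the route of that reference: residual-gap decomposition, the component bounds on $|e_j^Ty_m|$ from Lemmas 5.1--5.2 there, and a telescoping cancellation. The GMRES parts are correct as written, including the $\kappa(\underline{H}_m)$ bookkeeping for the second claim and the use of $(V_{m+1}\underline{H}_m)^T\widetilde r_m^{gm}=0$ and $V_m^T\widetilde r_m^{fom}=0$.

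There is, however, one genuine slip in the FOM part. You quote the auxiliary lemma as $|e_j^Ty_m^{fom}|\leq \|\widetilde r_{j-1}^{fom}\|/\sigma_m(H_m)$, but the hypothesis \eqref{FOM_bound_inexact} is stated in terms of the \emph{GMRES} computed residual $\|\widetilde r_{k-1}^{gm}\|$. With your version of the lemma the cancellation does not close: the sum is bounded by $\tfrac{\varepsilon}{m}\sum_j \|\widetilde r_{j-1}^{fom}\|/\|\widetilde r_{j-1}^{gm}\|$, and since the GMRES residual minimizes the norm over the affine space one has $\|\widetilde r_{j-1}^{gm}\|\leq\|\widetilde r_{j-1}^{fom}\|$, so each ratio is $\geq 1$ and the chain runs the wrong way. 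The correct statement of \cite[Lemma~5.2]{Simoncini2003} bounds the FOM component by the \emph{GMRES} residual, $|e_j^Ty_m^{fom}|\leq \|\widetilde r_{j-1}^{gm}\|/\sigma_m(H_m)$, which is precisely why \eqref{FOM_bound_inexact} involves $\|\widetilde r_{k-1}^{gm}\|$ (and why the paper remarks that substituting $\|\widetilde r_{k-1}^{fom}\|$ is only a more conservative, still valid, choice). With the lemma quoted correctly your argument goes through verbatim; everything else is fine.
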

Notice that the bound in~\eqref{FOM_bound_inexact} depends on the norm of the computed GMRES residual. This can be easily computed when Algorithm~\ref{LR_FOM} is performed as 
$\|\widetilde r_m^{gm}\|=|e_{m+1}^T\underline{g}_{m+1}|$ in line~\ref{gmres_rescomp} of Algorithm~\ref{LR_FOM}. However, if the FOM residual $\widetilde r_{k-1}^{fom}$ exists for every $k\leq m$, 
$\|\widetilde r_{k-1}^{gm}\|$ can be replaced by $\|\widetilde r_{k-1}^{fom}\|$ in~\eqref{FOM_bound_inexact}.

The quantities involved in the estimates~\eqref{GMRES_bound_inexact1}-\eqref{GMRES_bound_inexact2}-\eqref{FOM_bound_inexact} are not available at iteration $k<m$ making the latter of theoretical interest only. To have practically usable truncation thresholds, the quantities in \eqref{GMRES_bound_inexact1}-\eqref{GMRES_bound_inexact2}-\eqref{FOM_bound_inexact} must be approximated with computable values.  Following the suggestions in~\cite{Simoncini2003}, we can replace $m$ by the maximum number $m_{\max}$ of allowed iterations, $\sigma_{m_{\max}}(\underline{H}_{m_{\max}})$
is replaced by $\sigma_{n^2}(\mathcal{A})$, and we approximate $\sigma_1(\underline{H}_{m_{\max}})$ by $\sigma_1(\mathcal{A})$ when computing $\kappa(\underline{H}_{m_{\max}})$ in~\eqref{GMRES_bound_inexact2}.
The extreme singular values of $\mathcal{A}$ can be computed once and for all at the beginning of the iterative procedure, e.g., 
by the Lanczos method that must be carefully designed to avoid the construction of $\mathcal{A}$ and to exploit its Kronecker structure. Approximations of $\sigma_{1}(\mathcal{A})$ and
$\sigma_{n^2}(\mathcal{A})$ coming, e.g., from some particular features of the problem of interest, can be also employed.
To conclude, we propose to use the following practical truncation thresholds $\varepsilon_{\mathcal{A}}^{(k)}$ in line~\ref{algo_firsttrunc} of Algorithm~\ref{LR_FOM} in place of $\varepsilon_{\mathcal{A}}$:
\begin{align}\label{GMRES_bound_inexact1_prac}
  \|E_k\|&\leq\varepsilon_{\mathcal{A}}^{(k)}=
	\begin{cases}
	\frac{c_1}{m_{\max}}\frac{1}{\|\widetilde r_{k-1}^{gm}\|}\varepsilon,\quad c_1\approx \sigma_{n^2}(\mathcal{A}),\\
	\frac{1}{m_{\max}c_2}\frac{1}{\|\widetilde r_{k-1}^{gm}\|}\varepsilon,\quad c_2\approx \kappa(\mathcal{A}),%
\end{cases} 
\end{align}
for LR-GMRES, and
 \begin{equation}\label{FOM_bound_inexact_prac}
  \|E_k\|\leq\varepsilon_{\mathcal{A}}^{(k)}=\frac{c_1}{m_{\max}}\frac{1}{\|\widetilde r_{k-1}^{gm}\|}\varepsilon,
 \end{equation}
 for LR-FOM.
Allowing $\|E_k\|$ to grow is remarkably important in our setting, especially for the memory requirements of the overall procedure.
Indeed, if the truncation step in line~\ref{algo_firsttrunc} of Algorithm~\ref{LR_FOM} is not performed, the rank of the 
basis vectors increases very quickly as, at the $m$-th iteration, we have
$$\text{rank}(\mathcal{V}_{1,m}\mathcal{V}_{2,m}^T)\leq qp^m.$$
Therefore, at the first iterations the rank of the basis vectors is low by construction and having a very stringent tolerance in the computation of their low-rank approximations is not an issue.
When the iterations proceed, the rank of the basis vectors increases but, at the same time, the increment in the thresholds for computing low-rank approximations of such vectors leads to more 
aggressive truncations with consequent remarkable gains in the memory allocation.

The interpretation of the truncation in line~\ref{algo_firsttrunc} of Algorithm~\ref{LR_FOM} in terms of an inexact Krylov procedure has been already proposed in~\cite{Dolgov2013} for the more general case of GMRES applied to~\eqref{eq:linear_system} where $\mathcal{A}$ is a tensor and the approximate solution is represented in the tensor-train (TT) format.
However, also in the tensor setting, the results in Theorem~\ref{Theorem_inexactProd} hold if and only if the matrix $V_m$ has orthonormal columns. In general, the low-rank truncation in line~\ref{algo_secondtrunc} can destroy the orthogonality of basis. In the next section we show that $V_m$ has orthogonal columns if the truncation step is performed in an appropriate way. 

We first conclude this section with a couple of remarks.
\begin{remark}\label{Remark_initialguess}
  We have always assumed the initial guess $x_0\in\mathbb{R}^n$ in~\eqref{SolutionExpression} to be zero. This choice is motivated by the discussion in~\cite[Section 3]{Simoncini2003}, \cite{LieStr12} where 
the authors show how this is a good habit in the framework of inexact Krylov methods. 
\end{remark}

\begin{remark}\label{Remark_residualnorm}
 Since
 $$\|r_m\|\leq \|\widetilde r_m\|+\sum_{j=1}^m \|E_j\|\cdot|e_j^Ty_m|\leq \|\widetilde r_m\|+\sum_{j=1}^m \varepsilon_{\mathcal{A}}^{(j)}\cdot|e_j^Ty_m|,$$
where $\varepsilon_{\mathcal{A}}^{(j)}$ denotes one of the values in~\eqref{GMRES_bound_inexact1_prac}-\eqref{FOM_bound_inexact_prac}
depending on the selected procedure,
the quantity $\|\widetilde r_m\|+\sum_{j=1}^m \varepsilon_{\mathcal{A}}^{(j)}\cdot|e_j^Ty_m|$ must be computed to have a reliable stopping criterion in Algorithm~{\ref{LR_FOM}}. This means that the 
linear system $U_my_m=g_m$ has to be solved at each iteration $m$. This does not significantly increase the computational workload because $U_m\in\mathbb{R}^{m\times m}$ is of small dimension and already given in triangular form.
\end{remark}

\subsection{Structured perturbations of the basis}\label{Structured perturbations of the basis}
In this section we show how the low-rank truncations performed during the Gram-Schmidt procedure in line~\ref{algo_secondtrunc} of Algorithm~\ref{LR_FOM} preserve the orthogonality of the basis, i.e.,
$V_m$ is still an orthonormal matrix, and the results presented in section~\ref{Inexact matrix-vector products} are still valid.
\begin{Prop}\label{Prop_orthBasis}
 The matrix  $V_{m+1}=[\text{vec}(\mathcal{V}_{1,1}\mathcal{V}_{2,1}^T),\ldots,\text{vec}(\mathcal{V}_{1,m+1}\mathcal{V}_{2,m+1}^T)]\in\mathbb{R}^{n^2\times (m+1)}$
 computed by performing $m$ iterations of Algorithm~\ref{LR_FOM} where the low-rank truncations are computed by Algorithm~\ref{trunc_routine1.1} has orthonormal columns.
\end{Prop}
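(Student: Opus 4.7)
The plan is induction on $m$. The base case $m=0$ is immediate: line~1 of Algorithm~\ref{LR_FOM} sets $\mathcal{V}_{1,1}\mathcal{V}_{2,1}^T = C_1C_2^T/\beta$ with $\beta=\|C_1C_2^T\|$, so $\|\mathcal{V}_{1,1}\mathcal{V}_{2,1}^T\|=1$ and $V_1$ has an orthonormal column. Assuming now that $V_m$ has orthonormal columns, i.e.\ $\langle \mathcal{V}_{1,i}\mathcal{V}_{2,i}^T,\mathcal{V}_{1,j}\mathcal{V}_{2,j}^T\rangle_F=\delta_{ij}$, the unit-norm part for the new column is immediate from line~9: since $h_{m+1,m}=\|\widebar V_1\widebar V_2^T\|$, bilinearity yields $\|\mathcal{V}_{1,m+1}\mathcal{V}_{2,m+1}^T\|=1$. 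What remains is Frobenius-orthogonality of $\mathcal{V}_{1,m+1}\mathcal{V}_{2,m+1}^T$ against each previous $\mathcal{V}_{1,j}\mathcal{V}_{2,j}^T$.

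The starting point is the observation that the argument $L\Theta_m N^T=\widebar V_1\widebar V_2^T-\sum_{j=1}^m h_{j,m}\mathcal{V}_{1,j}\mathcal{V}_{2,j}^T$ passed to $\mathtt{trunc}$ in line~7 is itself Frobenius-orthogonal to each $\mathcal{V}_{1,j}\mathcal{V}_{2,j}^T$: this is precisely the defining property of the accumulated coefficients $h_{j,m}$ collected in line~5, combined with the inductive orthonormality. The whole proposition therefore reduces to showing that Algorithm~\ref{trunc_routine1.1} inherits this orthogonality, in the sense that the output $(F,G)$ satisfies $\langle\mathcal{V}_{1,j}\mathcal{V}_{2,j}^T,FG^T\rangle_F=0$ for every $j\leq m$.

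To analyze Algorithm~\ref{trunc_routine1.1} I would use its skinny QR factorizations $L=Q_LR_L$, $N=Q_NR_N$ to write $\mathcal{V}_{1,j}=Q_LR_{L,j}$ and $\mathcal{V}_{2,j}=Q_NR_{N,j}$, where $R_{L,j}$, $R_{N,j}$ are the column blocks of $R_L$, $R_N$ corresponding to $\mathcal{V}_{1,j}, \mathcal{V}_{2,j}$. Denoting the SVD $U\Sigma W^T = R_L\Theta_m R_N^T$ and the truncated factors $F=Q_L U_{\widebar k}\sqrt{\Sigma_{\widebar k}}$, $G=Q_N W_{\widebar k}\sqrt{\Sigma_{\widebar k}}$, using $Q_L^TQ_L=I$, $Q_N^TQ_N=I$ and the cyclic property of the trace rewrites the pre-truncation orthogonality as $\text{trace}(R_{L,j}^T U\Sigma W^T R_{N,j})=0$ and the truncated inner product as $\text{trace}(R_{L,j}^T U_{\widebar k}\Sigma_{\widebar k}W_{\widebar k}^T R_{N,j})$.

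The main obstacle is then showing that the trailing modes contribute nothing to the relevant trace, i.e.\ $\text{trace}(R_{L,j}^T U_{\widebar k+1:r}\Sigma_{\widebar k+1:r}W_{\widebar k+1:r}^T R_{N,j})=0$, so that truncating the SVD of $R_L\Theta_m R_N^T$ preserves the inner product against $\mathcal{V}_{1,j}\mathcal{V}_{2,j}^T$ exactly. The route I would take is to exploit the additive block decomposition $R_L\Theta_m R_N^T = R_{L,0}R_{N,0}^T - \sum_{j=1}^m h_{j,m}R_{L,j}R_{N,j}^T$ together with the Gram-Schmidt identities defining the $h_{j,m}$, which pin down how each rank-$s_j$ summand $R_{L,j}R_{N,j}^T$ sits inside the dominant singular subspaces of $R_L\Theta_m R_N^T$ that Algorithm~\ref{trunc_routine1.1} retains. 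Once this structural identity is established for a single truncation, it applies verbatim to both passes $\ell=1,2$ of the reorthogonalization loop in Algorithm~\ref{LR_FOM}, and the normalization in line~9 closes the induction.
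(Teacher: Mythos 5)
Your reduction is the right one, and it matches the paper's setup: after the exact Gram--Schmidt step the matrix $Z:=\widebar V_1\widebar V_2^T-\sum_{j}h_{j,m}\mathcal V_{1,j}\mathcal V_{2,j}^T$ is Frobenius-orthogonal to every $\mathcal V_{1,j}\mathcal V_{2,j}^T$, and the proposition hinges entirely on showing that the SVD tail discarded by Algorithm~\ref{trunc_routine1.1} contributes nothing to $\langle\mathcal V_{1,j}\mathcal V_{2,j}^T,\cdot\rangle_F$. But that is precisely the step you do not prove, and it is the only nontrivial one. The sentence ``exploit the additive block decomposition together with the Gram--Schmidt identities, which pin down how each summand sits inside the dominant singular subspaces'' is a plan, not an argument, and it points in a direction that cannot succeed on its own: for a generic matrix $M$ with $\langle Z,M\rangle_F=0$, the leading SVD part of $Z$ is \emph{not} orthogonal to $M$ (take $M=u_1w_1^T-(\sigma_1/\sigma_2)\,u_2w_2^T$), and the old basis vectors do not in general lie inside the dominant singular subspaces of $Z$. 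So some structural property of the truncation beyond ``$Z\perp\mathcal V_{1,j}\mathcal V_{2,j}^T$'' must enter, and your sketch never identifies it.

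The ingredient the paper uses is that the QR+SVD truncation produces a retained part $\widetilde V_1\widetilde V_2^T$ and a discarded part $F_{1,m}F_{2,m}^T$ that are \emph{block}-orthogonal: their column spaces, $\mathrm{Range}(Q_1U_{k_m})$ versus $\mathrm{Range}(Q_1\hat U)$, and likewise their row spaces, are simultaneously orthogonal --- a much stronger statement than $\langle\widetilde V_1\widetilde V_2^T,F_{1,m}F_{2,m}^T\rangle_F=0$. The paper then pre- and post-multiplies the truncation identity by the orthogonal projectors onto the retained left and right singular subspaces, identifies $\widetilde V_1\widetilde V_2^T$ with the two-sided projection of $Z$, splits each $\mathcal V_{1,j}\mathcal V_{2,j}^T$ into its component inside $\mathrm{Range}(Q_1U_{k_m})\otimes\mathrm{Range}(Q_2W_{k_m})$ and an orthogonal complement, and disposes of the two pieces separately. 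The paper even stresses, right after the proof, that this block-orthogonality is what makes the result hold and that other truncation strategies can destroy the orthogonality of the basis --- which is exactly why the trace identity $\mathrm{trace}\bigl(R_{L,j}^T\hat U\hat\Sigma\hat W^TR_{N,j}\bigr)=0$ that you need is as hard as the proposition itself. To complete your proof you would have to import this two-sided subspace structure of Algorithm~\ref{trunc_routine1.1} and carry out the projection argument; as written, the crux is missing.
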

\begin{proof}
 At the $m$-th iteration, the $(m+1)$-th basis vector is computed by performing~\eqref{truncation_orth} and then normalizing the result.
 In particular, if $\Theta_m=\text{diag}(I_{\widebar s},-h_{1,m}I_{s_1},\ldots,-h_{m,m}I_{ s_m})$, then
 $$(\widetilde V_{1},\widetilde V_{2})=\mathtt{trunc}([\widebar V_1,\mathcal V_{1,1},\ldots,\mathcal V_{1,m}],\Theta_m,[\widebar V_2,\mathcal V_{2,1},\ldots,\mathcal V_{2,m}],\varepsilon_{\mathtt{orth}}),$$
 that is 
 \begin{equation}\label{eq:orth_rel_GS}
 \widetilde V_{1}\widetilde V_{2}^{T}+F_{1,m}F_{2,m}^{T}=\widebar V_1\widebar V_2^T -\sum_{j=1}^m h_{j,m}\mathcal{V}_{1,j}\mathcal{V}_{2,j}^T,
 \end{equation}
 where $F_{1,m}F_{2,m}^{T}$ is the matrix discarded during the application of Algorithm~\ref{trunc_routine1.1}.
 
 If $Q_{1}R_{1}=[\widebar V_1,\mathcal V_{1,1},\ldots,\mathcal V_{1,m}]$, $Q_{2}R_{2}=[\widebar V_2,\mathcal V_{2,1},\ldots,\mathcal V_{2,m}]$ denote the skinny QR factorizations performed during $\mathtt{trunc}$ and 
 $U\Sigma W^T=R_{1}\Theta_mR_{2}^T$ is the SVD with
$U=[u_1,\ldots,u_{\mathfrak{s}_m}]$, $W=[w_1,\ldots,w_{\mathfrak{s}_m}]$, 
 $\Sigma=\text{diag}(\sigma_1,\ldots,\sigma_{\mathfrak{s}_m})$, $\mathfrak{s}_m:=\widebar s+\sum_{j=1}^m s_j$, then we  consider the partitionings 
$$
U=[U_{k_m},\hat U],\quad W=[W_{k_m},\hat W],\quad \Sigma=\text{diag}(\Sigma_{k_m},\hat \Sigma),
$$
 where $U_{k_m}$, $W_{k_m}$, $\Sigma_{k_m}$ contain the leading $k_m$ singular vectors and, respectively, singular values, and $k_m$ is the smallest index such that $\sqrt{\sum_{i=k_m+1}^{\mathfrak{s}_m}\sigma_i^2}\leq\varepsilon_{\mathtt{orth}}\cdot\|\Sigma\|$.
We can write
\begin{align*}
\widetilde V_{1}&=Q_{1}U_{k_m}\Sigma_{k_m}^{\tfrac{1}{2}},\quad \widetilde V_{2}=Q_{2}W_{k_m}\Sigma_{k_m}^{\tfrac{1}{2}},\quad
%
  F_{1,m}=Q_{1}\hat U\hat\Sigma^{\tfrac{1}{2}},\quad  F_{2,m}=Q_{2}\hat W\hat\Sigma^{\tfrac{1}{2}},
\end{align*}
and, 
since $Q_1$, $Q_2$, $W$ and $U$ are orthogonal matrices, 
\begin{align*}
(Q_1U_{k_m})^TF_{1,m}F_{2,m}^T(W_{k_m}^TQ_2^T)^T\equiv 0.
 \end{align*}
By pre and post-multiplying \eqref{eq:orth_rel_GS} by 
$Q_1U_{k_m}(Q_1U_{k_m})^T$ and $(W_{k_m}^TQ_2^T)^TW_{k_m}^TQ_2^T$, 
respectively, we thus get 
\begin{align*}
\widetilde V_{1}\widetilde V_{2}^{T}=&Q_1U_{k_m}(Q_1U_{k_m})^T\widebar V_1\widebar V_2^T(W_{k_m}^TQ_2^T)^TW_{k_m}^TQ_2^T \\
&-\sum_{j=1}^m h_{j,m}Q_1U_{k_m}(Q_1U_{k_m})^T\mathcal{V}_{1,j}\mathcal{V}_{2,j}^T(W_{k_m}^TQ_2^T)^TW_{k_m}^TQ_2^T.
 \end{align*}
 Since
\begin{align*}
h_{j,m}=&\langle \widebar V_1\widebar V_2^T,\mathcal{V}_{1,j}\mathcal{V}_{2,j}^T\rangle\\
=&\langle Q_1U_{k_m}(Q_1U_{k_m})^T\widebar V_1\widebar V_2^T(W_{k_m}^TQ_2^T)^TW_{k_m}^TQ_2^T,~~
Q_1U_{k_m}(Q_1U_{k_m})^T\mathcal{V}_{1,j}\mathcal{V}_{2,j}^T(W_{k_m}^TQ_2^T)^TW_{k_m}^TQ_2^T\rangle,
 \end{align*}
we can see $\widetilde V_{1}\widetilde V_{2}^{T}$ as the result of a specific Gram-Schmidt procedure in which %
%
$$ Q_1U_{k_m}(Q_1U_{k_m})^T\widebar V_1\widebar V_2^T(W_{k_m}^TQ_2^T)^TW_{k_m}^TQ_2^T,$$
is orthogonalized against 
$Q_1U_{k_m}(Q_1U_{k_m})^T\mathcal{V}_{1,j}\mathcal{V}_{2,j}^T(W_{k_m}^TQ_2^T)^TW_{k_m}^TQ_2^T$ for all $j=1,\ldots,m$.


Moreover, each $\mathcal{V}_{1,j}\mathcal{V}_{2,j}^T$ can be written as 
\begin{align*}
 \mathcal{V}_{1,j}\mathcal{V}_{2,j}^T=& \mathcal{V}_{1,j}\mathcal{V}_{2,j}^T- Q_1U_{k_m}(Q_1U_{k_m})^T\mathcal{V}_{1,j}\mathcal{V}_{2,j}^T(W_{k_m}^TQ_2^T)^TW_{k_m}^TQ_2^T\\
 &+Q_1U_{k_m}(Q_1U_{k_m})^T\mathcal{V}_{1,j}\mathcal{V}_{2,j}^T(W_{k_m}^TQ_2^T)^TW_{k_m}^TQ_2^T,
\end{align*}
and, since 
$$\text{vec}\left(\mathcal{V}_{1,j}\mathcal{V}_{2,j}^T- Q_1U_{k_m}(Q_1U_{k_m})^T\mathcal{V}_{1,j}\mathcal{V}_{2,j}^T(W_{k_m}^TQ_2^T)^TW_{k_m}^TQ_2^T\right)\perp\text{Range}(Q_1U_{k_m})\otimes\text{Range}(Q_2W_{k_m}),
$$
we have 
$$\langle \mathcal{V}_{1,j}\mathcal{V}_{2,j}^T, \widetilde V_{1}\widetilde V_{2}^{T}\rangle_F=0, \quad \text{for all }j=1,\ldots,m.$$

To conclude, $\mathcal{V}_{1,m+1}\mathcal{V}_{2,m+1}^T=\widetilde{V}_{1}\widetilde{V}_{2}^{T}/\|\widetilde{V}_{1}\widetilde{V}_{2}^T\|$ and 
$\text{vec}(\mathcal{V}_{1,m+1}\mathcal{V}_{2,m+1}^T)$ has thus unit norm.
 \end{proof}

 As shown in the proof of Proposition~\ref{Prop_orthBasis}, to maintain the orthogonality of the basis, it is crucial that $\widetilde V_{1}\widetilde V_{2}^T$ and $F_{1,m}F_{2,m}^T$
  are block-orthogonal to each other, i.e., $(\widetilde V_{1}\widetilde V_{2}^T)^TF_{1,m}F_{2,m}^T=0$, see, e.g., \cite{Gutknecht2006}, an not only orthogonal with respect to the matrix inner product $\langle\cdot,\cdot\rangle$.
 This is due to the 
 QR-SVD-based truncation we perform. 
In general,
it may happen that the computed basis $V_{m+1}$ is no longer orthogonal if different truncation strategies are adopted. 
In this case, the theory developed in, e.g., \cite{Kandler2019} may be exploited to estimate the distance of the computed basis to orthogonality and such a value can be incorporated in the bounds \eqref{GMRES_bound_inexact1}-\eqref{GMRES_bound_inexact2}-\eqref{FOM_bound_inexact} to preserve the convergence of the overall iterative scheme.

In spite of Proposition~\ref{Prop_orthBasis}, in finite precision arithmetic the computed basis $V_{m+1}$ may fall short of being orthogonal and the employment of a modified Gram-Schmidt procedure with reorthogonalization -- as outlined in Algorithm~\ref{LR_FOM} -- is recommended. See, e.g., \cite{Giraud2005,Giraud2005a} for some discussions about the loss of orthogonality in the Gram-Schmidt procedure.

The truncations performed during the orthogonalization procedure consist in another source of inexactness that must be taken into account.
The inexact Arnoldi relation~\eqref{inexact_Arnoldi} becomes
\begin{equation*}\label{inexact_Arnoldi2}
 \mathcal{A}V_m-[\text{vec}(E_1),\ldots,\text{vec}(E_m)]=V_{m}H_m+h_{m+1,m}\text{vec}(\mathcal{V}_{1,m+1}\mathcal{V}_{2,m+1}^T)e_m^T+[\text{vec}(F_{1,1}F_{2,1}^{T}),\ldots,\text{vec}(F_{1,m}F_{2,m}^{T})],
\end{equation*}
and one can derive results similar to the ones in Theorem~\ref{Theorem_inexactProd} for the inexact Arnoldi relation
$$
 \mathcal{A}V_m-[\text{vec}(E_1+F_{1,1}F_{2,1}^{T}),\ldots,\text{vec}(E_m+F_{1,m}F_{2,m}^{T})]=V_{m}H_m+h_{m+1,m}\text{vec}(\mathcal{V}_{1,m+1}\mathcal{V}_{2,m+1}^T)e_m^T,
 $$
obtaining estimates for $\|E_k+ F_{1,k}F_{2,k}^{T}\|$. Since
$$\|E_k+ F_{1,k}F_{2,k}^{T}\|\leq \|E_k\|+\| F_{1,k}F_{2,k}^{T}\|,$$
it may be interesting to study how to distribute the allowed inexactness between the truncation steps.

Since the rank of the iterates grows less dramatically during the orthogonalization step compared to what happens after the multiplication
with $\mathcal{A}$, we allow $2\|E_k\|$ 
to grow in accordance with 
Theorem~\ref{Theorem_inexactProd}, while $\| F_{1,k}F_{2,k}^{T}\|$ is maintained sufficiently small.
Indeed, the matrix $[\widebar V_1,\mathcal{V}_{1,1},\ldots,\mathcal{V}_{1,m}]\Theta_m[\widebar V_2,\mathcal{V}_{2,1}\ldots,\mathcal{V}_{2,m}]^T$ in line~\ref{algo_secondtrunc} of Algorithm~\ref{LR_FOM}
is, in general, very rank-deficient and a significant reduction in the number of columns to be stored takes place even when the $\mathtt{trunc}$ function is applied with a small threshold.

In particular, at the $m$-th iteration, we can set 
\begin{equation}\label{trunctol_orth}
\varepsilon_{\mathtt{orth}}=\min\{\|E_k\|,\varepsilon/( m_{\max})\},
\end{equation}
 where $\varepsilon$ is the desired accuracy of the final solution in terms of 
relative residual norm.
This means that $\|E_k+ F_{1,k}F_{2,k}^{T}\|$ fulfills the estimates in~\eqref{GMRES_bound_inexact1}-\eqref{GMRES_bound_inexact2}-\eqref{FOM_bound_inexact} and the convergence is 
thus preserved.  

The vectors $y_m^{fom}$ and $y_m^{gm}$ can be still computed as in~\eqref{FOM_linearsystem}-\eqref{least_squares_GMRES} and Proposition~\ref{Prop_constraints} holds also when the low-rank truncation in line~\ref{algo_secondtrunc} of Algorithm~\ref{LR_FOM} are performed.
\begin{Prop}\label{Prop_constraints2}
Let~\eqref{inexact_Arnoldi2} hold and define $W_m=\mathcal{A}V_m-[\text{vec}(E_1),\ldots,\text{vec}(E_m)]$. If $y_m^{gm}$ is computed as in~\eqref{least_squares_GMRES}, where $\underline{H}_m$ stems from the low-rank Arnoldi procedure illustrated in Algorithm~\ref{LR_FOM} with low-rank truncations are performed by~Algorithm~\ref{trunc_routine1.1}, then $q_m^{gm}:=W_my_m^{gm}$ is such that
$$ q_m=\argmin_{q\in\text{Range}(W_m)}\|\text{vec}(C_1C_2^T)+q\|.$$
Similarly, if $y_m^{fom}$ is computed as in~\eqref{FOM_linearsystem} where $H_m$ is the principal square submatrix of the aforementioned $\underline{H}_m$, then $q_m^{fom}:=W_my_m^{fom}$ is such that
$$\text{vec}(C_1C_2^T)+q_m\perp \text{Range}(V_m).$$
\end{Prop}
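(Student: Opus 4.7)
The plan is to mirror the proof of Proposition~\ref{Prop_constraints} (the result borrowed from~\cite{Simoncini2003}) almost verbatim, but applied to the augmented Arnoldi identity displayed immediately before the proposition, in which the orthogonalization truncation matrices $F_{1,k}F_{2,k}^{T}$ have been folded into the matrix-vector errors to produce an expression formally identical to~\eqref{inexact_Arnoldi}:
\[
\mathcal{A}V_m - [\text{vec}(E_1+F_{1,1}F_{2,1}^{T}),\ldots,\text{vec}(E_m+F_{1,m}F_{2,m}^{T})]=V_{m+1}\underline{H}_m.
\]
The enabling fact is Proposition~\ref{Prop_orthBasis}: it guarantees that $V_{m+1}$ still has orthonormal columns in the presence of both truncation steps, which is exactly the hypothesis on which the Simoncini-style argument rests. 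I would therefore open the proof by invoking Proposition~\ref{Prop_orthBasis} and letting the combined error $\widetilde{E}_k:=E_k+F_{1,k}F_{2,k}^{T}$ play the role of the single inexactness term appearing in Proposition~\ref{Prop_constraints}.

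For the GMRES half I would exploit $\text{vec}(C_1C_2^T)=\beta v_1\in\text{Range}(V_{m+1})$, so that the augmented residual expands as $\text{vec}(C_1C_2^T)+V_{m+1}\underline{H}_my = V_{m+1}(\beta e_1+\underline{H}_my)$ (up to the paper's sign convention). Orthonormality of $V_{m+1}$ then collapses the Frobenius norm to $\|\beta e_1+\underline{H}_my\|$, so the full least-squares problem over $y\in\mathbb{R}^m$ shares its minimiser with~\eqref{least_squares_GMRES}; hence $y_m^{gm}$ yields the claimed minimising $q_m^{gm}$. For the FOM half the calculation is even shorter: since $V_m^TV_{m+1}\underline{H}_m=[I_m,\mathbf{0}_m]\underline{H}_m=H_m$ and $V_m^T\text{vec}(C_1C_2^T)=\beta e_1$, the Galerkin condition $V_m^T(\text{vec}(C_1C_2^T)+q_m)=0$ collapses exactly to~\eqref{FOM_linearsystem}, which is how $y_m^{fom}$ is defined.

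The only real obstacle I anticipate is notational rather than analytical, namely reconciling the statement's definition $W_m=\mathcal{A}V_m-[\text{vec}(E_1),\ldots,\text{vec}(E_m)]$ (which literally omits the $F$ contributions) with the augmented operator $V_{m+1}\underline{H}_m=\mathcal{A}V_m-[\text{vec}(\widetilde{E}_k)]_k$ that the calculations above naturally use. The cleanest resolution is to read $W_m$ in the statement as the augmented version, matching the form used in Proposition~\ref{Prop_constraints} and making the two short computations above conclusive; otherwise one must argue separately that the extra term $[\text{vec}(F_{1,k}F_{2,k}^{T})]_k\,y_m$, controlled by the tolerance $\varepsilon_{\mathtt{orth}}$ prescribed in~\eqref{trunctol_orth}, does not disturb the optimality/Galerkin characterisation in any essential way. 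Once this notational point is fixed, the two verifications above close the proof with no further work.
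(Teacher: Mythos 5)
There is a genuine gap, and it is precisely the point you flag at the end as a ``notational obstacle'' and then leave unresolved. The proposition deliberately defines $W_m=\mathcal{A}V_m-[\text{vec}(E_1),\ldots,\text{vec}(E_m)]$ \emph{without} the orthogonalization-truncation terms, so by~\eqref{inexact_Arnoldi2} one has $W_m=V_{m+1}\underline{H}_m+F$ with $F:=[\text{vec}(F_{1,1}F_{2,1}^{T}),\ldots,\text{vec}(F_{1,m}F_{2,m}^{T})]$, and the whole content of the result is that this extra $F$ does not disturb the Petrov--Galerkin/Galerkin characterizations. Your first suggested fix --- reading $W_m$ as the augmented operator $\mathcal{A}V_m-[\text{vec}(E_k+F_{1,k}F_{2,k}^{T})]_k$ --- proves a different statement, one that is an immediate instance of Proposition~\ref{Prop_constraints} and says nothing new. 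Your second suggested fix appeals to the smallness of $\|F_{1,k}F_{2,k}^{T}\|$ via $\varepsilon_{\mathtt{orth}}$; that can only yield an approximate version, whereas the proposition asserts \emph{exact} optimality and orthogonality.

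The missing idea, which is the entire body of the paper's proof, is that $V_m^TF=0$ holds \emph{exactly}, not approximately, because of the structure of the QR$+$SVD truncation in Algorithm~\ref{trunc_routine1.1}: the discarded block $F_{1,j}F_{2,j}^{T}$ lives in the orthogonal complement (in the sense of the singular-vector splitting $U=[U_{k_j},\hat U]$, $W=[W_{k_j},\hat W]$) of the retained block, so $\langle \widetilde V_1\widetilde V_2^{T},F_{1,j}F_{2,j}^{T}\rangle_F=0$ and, by the argument of Proposition~\ref{Prop_orthBasis}, $\langle \mathcal{V}_{1,i}\mathcal{V}_{2,i}^{T},F_{1,j}F_{2,j}^{T}\rangle_F=0$ for all relevant $i,j$. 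Once $V_m^TF=0$ is established, $V_m^TW_m=H_m$ and (for the least-squares part) the Simoncini--Szyld argument goes through with $W_m$ as literally defined. Your proposal correctly invokes Proposition~\ref{Prop_orthBasis} for the orthonormality of $V_{m+1}$ and correctly reduces the problem to handling the $F$ term, but it stops exactly where the actual proof begins.
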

\begin{proof}
 We only need to prove that $V_m^T[\text{vec}(F_{1,1}F_{2,1}^{T}),\ldots,
 \text{vec}(F_{1,m}F_{2,m}^{T})]=0$
 as the rest of the proof comes from 
~\cite[Proposition 3.2-3.3]{Simoncini2003}.
 
 Using the same arguments of the proof of Proposition~\ref{Prop_orthBasis}, we can show that $F_{1,j}F_{2,j}^{T}$ is orthogonal to $\mathcal{V}_{1,i}\mathcal{V}_{2,i}^T$ for all  $j,i=1,\ldots,m$, $i+1\neq j$. Therefore, the only nonzero components of 
$$V_m^T[\text{vec}(F_{1,1}F_{2,1}^{T}),\ldots,
 \text{vec}(F_{1,m}F_{2,m}^{T})],$$
 are in the first subdiagonal. These entries are of the form  $\langle \mathcal{V}_{1,\ell+1}\mathcal{V}_{2,\ell+1}^T,F_{1,\ell}F_{2,\ell}^{T}\rangle_F$ and we show they are zero for every $\ell=1,\ldots,m-1$. We have $\mathcal{V}_{1,\ell+1}\mathcal{V}_{2,\ell+1}^T=\widetilde{V}_{1}\widetilde{V}_{2}^{T}/\|\widetilde{V}_{1}\widetilde{V}_{2}^T\|$ and in the proof of Proposition~\ref{Prop_orthBasis} we have already shown that 
 $\langle \widetilde{V}_{1}\widetilde{V}_{2}^T,F_{1,\ell}F_{2,\ell}^T\rangle_F=0$. This completes the proof.
\end{proof}


The true relative residual norm can be written as 
$$r_m=\widetilde r_m-[\text{vec}(E_1+F_{1,1}F_{2,1}^{T}),\ldots,\text{vec}(E_m+F_{1,m}F_{2,m}^{T})]y_m,$$
 and following the discussion in Remark~\ref{Remark_residualnorm} we have
 \begin{equation}\label{upperbound_res2}
\|r_m\|\leq \|\widetilde r_m\|+\sum_{j=1}^m \|E_j\|\cdot|e_j^Ty_m|+\sum_{j=1}^m \|F_{1,j}F_{2,j}^{T}\|\cdot|e_j^Ty_m|
\leq\|\widetilde r_m\|+\sum_{j=1}^m \left(\varepsilon_{\mathcal{A}}^{(j)}+\frac{m}{m_{\max}}\varepsilon\right)|e_j^Ty_m|,
 \end{equation}
so that the right-hand side in the above expression must be computed to check convergence.

\section{Alternative truncation strategies}\label{sec:ranktrunc}

As we discussed above, to keep the low-rank Krylov methods computationally feasible, 
the quantities involved in the solution process have to be compressed so that their rank, i.e., the sizes of the low-rank factors, is kept small. Let $NML^T$ with factors $N,L\in\mathbb{R}^{n\times m}$, $M\in\mathbb{R}^{m\times m}$, be the quantity to be compressed, and
assume that $\mathrm{rank}(NML^T)=m$.
So far we have used a direct approach using QR and SVD decompositions in Algorithm~\ref{trunc_routine1.1} which essentially computes a partial SVD of $NML^T$ corresponding to all $m$ nonzero singular values. This whole procedure relies heavily on dense linear algebra computations and can, hence, become quite expensive. This is especially due to the QR decompositions which will be expensive if the rectangular factors $N,L$ have many columns. Moreover, if $NML^T$ has a very small numerical numerical rank, say $k\ll m$, then Algorithm~\ref{trunc_routine1.1} will generate a substantial computational overhead because $m-k$ singular vectors will be thrown away. Nevertheless, thanks to the complete knowledge of all singular values, this procedure is able to correctly assess the truncation error in the Frobenius norm so that the required accuracy of the truncation is always met.

Following the discussion in, e.g.,~\cite{Stoll2015,BenOnwSto15,Onw16}, a more economical alternative could be to compute only a partial SVD $NML^T\approx U_k\Sigma_kW_k^T$ associated to the $k$ singular values that are larger than the given truncation threshold. If also the $(k+1)$-th singular value is computed, one has the truncation error in the 2-norm: $\|NML^T-U_k\Sigma_kW_k^T\|_2\leq \sigma_{k+1}(NML^T)$. Obviously, the results of the previous section are still valid if this form of truncation is used. 
Approximations of the dominant singular values and corresponding singular vectors can be computed by iterative methods for large-scale SVD computations as, e.g., Lanczos bidiagonalization~(see, e.g., \cite{Lar98,BagRei05,Sto12}) or Jacobi-Davidson methods; see \cite{Hoc01}.
To apply these methods, only matrix vector products $N(M(L^Tx))$ and $L(M^T(N^Tx))$ are required. For achieving the compression goal one could, e.g., compute $k_{\max}\geq k$ triplets and, if required, neglect any singular vectors corresponding to singular value below a certain threshold. However, we do in general not know in advance how many singular values will be larger than a given threshold. Picking a too small value of $k_{\max}$ can lead to very inaccurate truncations that do not satisfy the required thresholds \eqref{GMRES_bound_inexact1}--\eqref{FOM_bound_inexact}, \eqref{trunctol_orth} and, therefore, endanger the convergence of the low-rank Krylov method.  
Some of aforementioned iterative SVD methods converge theoretically monotonically, i.e., the singular values are found in a decreasing sequence starting with the largest one. Hence, the singular value finding iteration can be kept running until a sufficiently small singular value approximation, e.g., $\tilde \sigma< \varepsilon_{\text{trunc}}\|NML^T\|_2$, is detected. 
In the practical situations within low-rank Krylov methods, the necessary number of singular triplets can be $\mathcal{O}(10^2)$ or larger and it may be difficult to ensure that the iterative SVD algorithms do not miss some of the largest singular values or that no singular values are detected several times. Due to the sheer number of occurrences where compression is required in Algorithm~\ref{LR_FOM}, preliminary tests with iterative SVD methods did not yield any substantial savings compared to the standard approach in Algorithm~\ref{trunc_routine1.1}. 

Compression algorithms based on randomized linear algebra might offer further alternative approaches with reduced computational times. See, e.g., \cite{HalMT11,KreP17,Che2019}.  
\section{Preconditioning}\label{Preconditioning}

It is well-known that Krylov methods require preconditioning in order to obtain a fast convergence in terms of number of iterations and low-rank Krylov methods are no exception. However, due to the peculiarity of our framework, the preconditioner operator must possess some supplementary features with respect to standard preconditioners for linear systems. Indeed, in addition to be effective in reducing the number of iterations at a reasonable computational cost, the preconditioner operator must not dramatically increase 
the memory requirements of the solution process.

Given a nonsingular operator $\mathcal{P}$ or its inverse $\mathcal{P}^{-1}$, if we employ right preconditioning, the original systems~\eqref{eq:linear_system} is transformed into
\begin{equation}\label{eq:precond_linearsystem}
 \mathcal{A}\mathcal{P}^{-1}\widebar x=-\text{vec}(C_1C_2^T),\quad \text{vec}(X)=\mathcal{P}^{-1}\widebar x,
\end{equation}
so that, at each iteration $m$, we have to apply 
$\mathcal{P}^{-1}$ to the current basis vector $\text{vec}(\mathcal{V}_{1,m}\mathcal{V}_{2,m}^T)$. Note that we restrict ourselves here to right preconditioning because this has the advantage that one can still monitor the true unpreconditioned residuals without extra work within the Krylov routine. Of course, in principle also left and two-sided preconditioning can be used.

The preconditioning operation must be able to exploit the low-rank format of $\mathcal{V}_{1,m}\mathcal{V}_{2,m}^T$. Therefore, a naive operation of the form $\mathcal{P}^{-1}\text{vec}(\mathcal{V}_{1,m}\mathcal{V}_{2,m}^T)$ is not admissible in our context as this would require the allocation of the dense $n\times n$ matrix $\mathcal{V}_{1,m}\mathcal{V}_{2,m}^T$. One way to overcome this numerical difficulty is to employ a preconditioner operator $\mathcal{P}$ which allows for a representation in terms of a Kronecker sum, namely
\begin{equation}\label{precond_expression}
\mathcal{P}=\sum_{i=1}^\ell P_i\otimes 
T_i. 
\end{equation}
This means that the operation $z_m=\mathcal{P}^{-1}\text{vec}(\mathcal{V}_{1,m}\mathcal{V}_{2,m}^T)$ is equivalent to solving the matrix equation 
\begin{equation}\label{precond_equation}
\sum_{i=1}^\ell T_iY_mP_i^T-\mathcal{V}_{1,m}\mathcal{V}_{2,m}^T=0, \quad \text{vec}(Y_m)=z_m. 
\end{equation}

In our setting,
the operator $\mathcal{P}$ often amounts to an approximation to $\mathcal{A}$ in~\eqref{eq:linear_system} obtained by either dropping some terms in the series or replacing some of them by a multiple of the identity. See, e.g.,~\cite{Palitta2016,Powell2017,Ullmann2010}. 
Another option that has not been fully explored in the matrix equation literature so far is the case of polynomial preconditioners~(see, e.g., \cite{Gij95,LiuMW15}) where $\mathcal{P}^{-1}$ resembles a fixed low-degree polynomial evaluated in $\mathcal{A}$.
Alternatively, we can formally set $\mathcal{P}=\mathcal{A}$ in~\eqref{precond_expression} and inexactly solve equation~\eqref{precond_equation} by few iterations of another Krylov method (e.g., Algorithm~\ref{LR_FOM}) leading to an inner-outer Krylov method; see, e.g., \cite{Simoncini2002}. 

Clearly, equation~\eqref{precond_equation} must be easy to solve.
For instance, if $\ell=1$, then $Y_m=(T_1^{-1}\mathcal{V}_{1,m})(P_1^{-1}\mathcal{V}_{2,m})^T$ and an exact application of the preconditioner can be carried out. Similarly, when $\ell=2$ and a fixed number of ADI iterations are performed at each Krylov iteration $m$, then it is easy to show that we are still working in an exact preconditioning framework. See, e.g. \cite{Damm2008,Benner2013}.
In all these cases, the results presented in the previous sections still hold provided $\mathcal{A}$ is replaced by the preconditioned matrix $\mathcal{AP}^{-1}$.

Equation~\eqref{precond_equation} is often iteratively solved and, in general, this procedure leads to the computation of a low-rank approximation $\mathcal{Z}_{1,m}\mathcal{Z}_{2,m}^T$ to $Y_m$ that has to be interpreted as a variable preconditioning scheme with a different preconditioning operator at each outer iteration. In this cases, a flexible variant of Algorithm~\ref{LR_FOM} must be employed which consists in a standard flexible Krylov procedure equipped with the low-rank truncations presented in the previous sections. See, e.g.,~\cite[Section 10]{Simoncini2007a} for some details about flexible Krylov methods and~\cite[Section 9.4.1]{Saad2003,Saad1993} for a discussion about flexible GMRES. 

We must mention that the employment of a flexible procedure doubles, at least, the memory requirements of the solution process. Indeed, both the \emph{preconditioned} and \emph{unpreconditioned} bases must be stored and $\text{rank}(\mathcal{Z}_{1,m}\mathcal{Z}_{2,m}^T)\geq \text{rank}(\mathcal{V}_{1,m}\mathcal{V}_{2,m}^T)$ for all $m$. This aspect must be taken into account when designing the preconditioner. See Example~\ref{Ex.1}.

At a first glance, the presence of a variable preconditioning 
procedure can complicate the derivations illustrated in sections~\ref{Inexact matrix-vector products}-\ref{Structured perturbations of the basis} for the safe selection of the low-rank truncation thresholds that guarantee the convergence of the solution method.
Indeed, if at iteration $m$, $\mathcal{Z}_{1,m}\mathcal{Z}_{2,m}^T$ is the result of the preconditioning step~\eqref{precond_equation}, 
we still want to truncate the matrix $[A_1\mathcal{Z}_{1,m},\ldots, A_p\mathcal{Z}_{1,m}][B_1\mathcal{Z}_{2,m},\ldots, B_p\mathcal{Z}_{2,m}]^T$ in order to moderate the storage demand and one may wonder if the inexactness of step~\eqref{precond_equation} plays a role in such a truncation. Thanks to the employment of a flexible strategy, we are going to show how the tolerances for the low-rank truncations, namely $\varepsilon_{\mathcal{A}}$ and $\varepsilon_{\mathtt{orth}}$ in Algorithm~\ref{LR_FOM}, can be still computed as illustrated in sections~\ref{Inexact matrix-vector products}-\ref{Structured perturbations of the basis}.

Flexible Krylov methods are characterized not only by having a preconditioner that changes at each iteration, but also from the fact that the solution is recovered by means of the preconditioned basis. In particular,
$$\text{vec}(X_m)=Z_my_m,\quad Z_m:=[\text{vec}(\mathcal{Z}_{1,1}\mathcal{Z}_{2,1}^T),\ldots,\text{vec}(\mathcal{Z}_{1,m}\mathcal{Z}_{2,m}^T)],$$
see, e.g.,~\cite{Saad1993}; this is a key ingredient in our analysis.

We start our discussion by considering flexible Krylov methods with no truncations. For this class of solvers the relation 
\begin{equation}\label{flexible_notruncation}
 \mathcal{A}Z_m=V_mH_m+h_{m+1,m}\text{vec}(\mathcal{V}_{1,m+1}\mathcal{V}_{2,m+1}^T)e_m^T,
\end{equation}
holds, see, e.g.,~\cite[Equation (9.22)]{Saad2003}, and $\text{span}\{\text{vec}(\mathcal{Z}_{1,1}\mathcal{Z}_{2,1}^T),\ldots,\text{vec}(\mathcal{Z}_{1,m}\mathcal{Z}_{2,m}^T)\}$ is not a Krylov subspace in general.
Therefore, also for the flexible Krylov methods with no low-rank truncations we must consider constrains different from the ones in~\eqref{Galerkin}-\eqref{MresGalerkin} and results similar to the ones in Proposition~\ref{Prop_constraints} with $W_m=\mathcal{A}Z_m$ hold. See, e.g.,~\cite[Proposition 9.2]{Saad2003}.

If we now introduce a low-rank truncation of the matrix 
$$[A_1\mathcal{Z}_{1,m},\ldots, A_p\mathcal{Z}_{1,m}][B_1\mathcal{Z}_{2,m},\ldots, B_p\mathcal{Z}_{2,m}]^T,$$ 
at each iteration $m$, that is we compute 
\begin{equation}\label{truncation_flexible}
(\widebar V_1, \widebar V_2)=\mathtt{trunc}([A_1\mathcal{Z}_{1,m},\ldots, A_p\mathcal{Z}_{1,m}],I,[B_1\mathcal{Z}_{2,m},\ldots, B_p\mathcal{Z}_{2,m}],\varepsilon_{\mathcal{A}}), 
\end{equation}
then the relation~\eqref{flexible_notruncation} becomes
\begin{equation}\label{flexible_onetruncation}
 \mathcal{A}Z_m-[\text{vec}(E_1),\ldots,\text{vec}(E_m)]=V_mH_m+h_{m+1,m}\text{vec}(\mathcal{V}_{1,m+1}\mathcal{V}_{2,m+1}^T)e_m^T,
\end{equation}
where the matrices $E_k$'s are the ones discarded when~\eqref{truncation_flexible} is performed.
If $\|E_k\|$ satisfies the inequalities in Theorem~\ref{Theorem_inexactProd}, then the convergence of the low-rank flexible Krylov procedure is still guaranteed in the sense that the residual norm keeps decreasing as long as 
$\text{span}\{\text{vec}(\mathcal{Z}_{1,1}\mathcal{Z}_{2,1}^T),\ldots,\text{vec}(\mathcal{Z}_{1,m}\mathcal{Z}_{2,m}^T)\}$ grows. However, the matrix $H_m$ no longer represents an approximation of $\mathcal{A}$ onto the current subspace and the approximation of $\sigma_{m_{\max}}(\underline{H}_{m_{\max}})$ and $\sigma_1(\underline{H}_{m_{\max}})$ in the right-hand side of~\eqref{GMRES_bound_inexact1}-\eqref{GMRES_bound_inexact2}-\eqref{FOM_bound_inexact} by the corresponding singular values of $\mathcal{A}$ may no longer be effective. In our numerical experience, approximating
$\sigma_{m_{\max}}(\underline{H}_{m_{\max}})$ and $\sigma_1(\underline{H}_{m_{\max}})$ by the smallest and largest singular values of the preconditioned matrix $\mathcal{A}\mathcal{P}^{-1}$, i.e., mimicking what is done in case of exact applications of $\mathcal{P}$,
provides satisfactory results.
Obtaining computable approximations to $\sigma_{m_{\max}}(\underline{H}_{m_{\max}})$ and $\sigma_1(\underline{H}_{m_{\max}})$ for the inner-outer approach is not straightforward. In this case, a practical approach may be to still approximate
$\sigma_{m_{\max}}(\underline{H}_{m_{\max}})$ and $\sigma_1(\underline{H}_{m_{\max}})$
by $\sigma_{n^2}(\mathcal{A})$ and $\sigma_{1}(\mathcal{A})$, respectively.
 These approximations may be very rough as they completely neglect the role of the preconditioner so that they may lead to quite conservative truncation thresholds. However, at the moment, we do not see any another possible alternatives.

The introduction of the low-rank truncations that lead to~\eqref{flexible_onetruncation} implies that the constrained imposed on the residual vector are no longer in terms of the space spanned by $Z_m$ and the results presented in Proposition~\ref{Prop_constraints} with $W_m=\mathcal{A}Z_m-[\text{vec}(E_1),\ldots,\text{vec}(E_m)]$ hold.

In flexible Krylov methods, the orthogonalization procedure involves only the unpreconditioned basis $V_m$ so that  the truncation step in line~\ref{algo_secondtrunc} of Algorithm~\ref{LR_FOM}
is not really affected by the preconditioning procedure and the results in Proposition~\ref{Prop_orthBasis}-\ref{Prop_constraints2} are still valid. The truncation threshold $\varepsilon_{\mathtt{orth}}$ can be still selected as proposed in section~\ref{Structured perturbations of the basis}.

\section{Short recurrence methods}\label{Short recurrence methods}
Short recurrence Krylov methods can be very appealing in our context as only a fixed, usually small, number of basis vectors have to be stored.
In case of symmetric problems, i.e., equation~\eqref{eq_main} where all the coefficient matrices $A_i$'s and $B_i$'s are symmetric, 
the low-rank MINRES algorithm proposed in~\cite{Paige1975} can be employed in  the solution process. 

If $\mathcal{A}$ in~\eqref{eq:linear_system} is also positive definite, the low-rank CG method~illustrated in \cite{Hestenes1952} is a valid candidate for the solution of equation~\eqref{eq_main}. Notice that, in general, it is not easy to characterize the spectral distribution of $\mathcal{A}$ in terms of the spectrum of the coefficient matrices $A_i$'s and $B_i$'s. However, it can be shown that if $A_i$ and $B_i$ are positive definite for all $i$, then also $\mathcal{A}$ is positive definite.

Short recurrence methods can be appealing also in case of a nonsymmetric $\mathcal{A}$ and low-rank variants of BICGSTAB~\cite{Vorst1992}, QMR~\cite{Freund1991} or other methods can be employed to solve equation~\eqref{eq_main}.

See, e.g.,~\cite{Stoll2015,Benner2013} for an implementation of low-rank MINRES, CG and BICGSTAB. 

In all the short recurrence Krylov methods, the constructed basis $V_m$ is not orthogonal in practice and this loss of orthogonality must be taken into account in the bounds for the allowed inexactness proposed in Theorem~\ref{Theorem_inexactProd}. In~\cite[Section 6]{Simoncini2003}, the authors propose to incorporate the smallest singular values of the computed basis, namely $\sigma_m(V_m)$, in the right-hand side of~\eqref{GMRES_bound_inexact1}-\eqref{GMRES_bound_inexact2}-\eqref{FOM_bound_inexact} to guarantee the convergence of the method.
However, no practical approximation to $\sigma_m(V_m)$ is proposed in~\cite{Simoncini2003}.

A different approach that can be pursued is the one illustrated in~\cite{Bouras2005}. 
In this paper the authors propose to select bounds of the form 
\begin{equation}\label{eq_CERFACS}
\|E_k\|\leq \min\left\{\alpha_k\varepsilon,1\right\}, \quad \alpha_k=\frac{1}{\min\left\{\|\widetilde r_k\|,1\right\}},
\end{equation}
where $\widetilde r_k$ is the current computed residual vector, and in~\cite{Eshof2004} the authors studied the effects of such a choice on
the convergence of a certain class of inexact Krylov methods. In particular, in~\cite{Eshof2004} it is shown how the residual gap $\delta_m$ remains small if $\|E_k\|$ fulfills~\eqref{eq_CERFACS} for all $k\leq m$. Even though the true residual and the computed one are close, this does not imply that the residual norm is actually always small and we thus have to assume that the norm of the computed residual goes to zero as it is done in~\cite{Eshof2004}.
\section{Numerical examples}\label{Numerical examples}

In this section we present some numerical results that confirm the theoretical analysis derived in the previous sections.
To this end we consider some general multiterm linear matrix equation of the form~\eqref{eq_main} stemming from the discretization of certain deterministic and stochastic PDEs.

We apply the LR-GMRES variant of Algorithm~\ref{LR_FOM} in the solution process and we always select Algorithm~\ref{trunc_routine1.1} for the low-rank truncations. 

We report the number of performed iterations, the rank of the computed solution, the computational time needed to calculate such a solution together with the relative residual norm achieved, and the storage demand. For the latter, we 
document the number of columns $\mathfrak{s}=\sum_{j=1}^{m+1}s_j$ of the matrix $[\mathcal{V}_{1,1},\ldots,\mathcal{V}_{1,m+1}]$, where $m$ is the number of iterations needed to converge.
Similarly, if a flexible strategy is adopted, we also report the number of columns $\mathfrak{z}$ of
$[\mathcal{Z}_{1,1},\ldots,\mathcal{Z}_{1,m}]$.

This means that, for equations of the form~\eqref{eq_main} where $n_A=n_B=n$, we have to allocate $2\mathfrak{s}$ ($2(\mathfrak{s}+\mathfrak{z})$) vectors of length $n$. If $n_A\neq n_B$, the memory requirements amount to $\mathfrak{s}$ ($\mathfrak{s}+\mathfrak{z}$) vectors of length $n_A$ and $\mathfrak{s}$ ($\mathfrak{s}+\mathfrak{z}$) vectors of length $n_B$.

The solution process is stopped as soon as the upper bound on the residual norm in~\eqref{upperbound_res2}, normalized by $\|C_1C_2^T\|_F$, gets smaller than $10^{-6}$.

As already mentioned, we always assume that the exact solution $X$ admits accurate low-rank approximations. Nevertheless, if $S_1,S_2$ are the low-rank factors computed by Algorithm~\ref{LR_FOM}, we report also the real relative residual norm 
$\|\sum_{i=1}^pA_iS_1S_2^TB_i^T+C_1C_2^T\|_F/\|C_1C_2^T\|_F$ in the following to confirm the 	reliability of our numerical procedure. Once again, the real residual norm can be computed at low cost by exploiting the low rank of $S_1S_2^T$ and the cyclic property of the trace operator.

All results were obtained with Matlab R2017b~\cite{MATLAB} on a Dell machine with 2.4GHz
processors and 250 GB of RAM.

\begin{num_example}\label{Ex.1}
{\rm
We consider a slight modification of Example 4 in~\cite{Palitta2016}. In particular, the continuous problem we have in mind is the convection-diffusion equation 
\begin{equation}\label{Ex.1_eq}
 \begin{array}{rlll}
         -\nu\Delta u + \vec{w}\cdot\nabla u&=&1,&  \text{in }D=(0,1)^2,\\
         u&=&0,&\text{on }\partial D,\\
        \end{array}
\end{equation}
where $\nu>0$ is the viscosity parameter and the convection vector $\vec{w}$ is given by $\vec{w}=(\phi_1(x)\psi_1(y),\phi_2(x)\psi_2(y))=((1-(2x+1)^2)y,-2(2x+1)(1-y^2))$. 
The centered finite differences discretization of equation~\eqref{Ex.1_eq} yields the following matrix equation
\begin{equation}\label{Ex.1_eq2}
\nu TX+\nu XT+\Phi_1BX\Psi_1+\Phi_2XB^T\Psi_2-\mathbf{1}\mathbf{1}^T=0, 
\end{equation}
where $T\in\mathbb{R}^{n\times n}$ is the negative discrete laplacian, $B\in\mathbb{R}^{n\times n}$ corresponds to the discretization of the first derivative, $\Phi_i$ and $\Psi_i$ are diagonal matrices collecting the nodal values of the corresponding functions $\phi_i$, $\psi_i$, $i=1,2$, and $\mathbf{1}\in\mathbb{R}^n$ is the vector of all ones. See~\cite{Palitta2016} for more details.

Even though equation~\eqref{Ex.1_eq2} amounts to a generalized Sylvester equation, the  solution schemes available in the literature and tailored to this kind of problems cannot be applied to equation~\eqref{Ex.1_eq2} in general. Indeed, to the best of our knowledge, all the existing methods for large-scale generalized equations rely on a splitting of the overall discrete operator of the form $\mathcal{M}+\mathcal{N}$, $\mathcal{M}(X)=\nu TX+\nu XT$, $\mathcal{N}(X)=\Phi_1BX\Psi_1+\Phi_2XB^T\Psi_2$, which is supposed to be convergent. See, e.g., \cite{Jarlebring2018,Shank2015,Benner2013}.
However, the latter property may be difficult to meet in case of the convection-diffusion equation, especially for dominant convection.

We thus have to interpret \eqref{Ex.1_eq2} as a general multiterm matrix equation of the form~\eqref{eq_main} and we solve it by the preconditioned LR-GMRES. Following the discussion in~\cite{Palitta2016}, we use the operator
$$
\begin{array}{rrll}
\mathcal{L}:& \mathbb{R}^{n\times n}&\rightarrow& \mathbb{R}^{n\times n}\\
&X&\mapsto& (\nu T+\widebar\psi_1\Psi_1B)X+X(\nu T+\widebar\phi_2B^T\Psi_2),
\end{array}$$
as preconditioner, where $\widebar\psi_1,\widebar\phi_2\in\mathbb{R}$ are the mean values  of $\psi_1(y)$ and $\phi_2(x)$ on $(0,1)$, respectively.

At each LR-GMRES iteration, we approximately invert $\mathcal{L}$ by performing 10 iterations of the extended Krylov subspace method for Sylvester equation\footnote{A Matlab implementation is available at {\tt http://www.dm.unibo.it/\textasciitilde simoncin/software.html}.} derived in~\cite{Breiten2016}. Since this scheme gives a different preconditioner every time it is called, we must employ the flexible variant of LR-GMRES.
To avoid an excessive increment in the memory requirements due to the allocation of both the preconditioned and unpreconditioned bases, we do not apply $\mathcal{L}$ to the current basis vector, i.e., at iteration $k$, we do not compute $\mathcal{Z}_{1,k}\mathcal{Z}_{2,k}^T\approx\mathcal{L}^{-1}(\mathcal{V}_{1,k}\mathcal{V}_{2,k}^T)$. We first truncate the low-rank factors $\mathcal{V}_{1,k},\mathcal{V}_{2,k}$, namely we compute $(\mathcal{\widehat V}_{1,k},\mathcal{\widehat V}_{2,k})=\mathtt{trunc}(\mathcal{V}_{1,k},I,\mathcal{V}_{2,k},\varepsilon_{\mathtt{precond}})$, 
and then define $\mathcal{Z}_{1,k},\mathcal{Z}_{2,k}$ such that $\mathcal{Z}_{1,k}\mathcal{Z}_{2,k}^T\approx\mathcal{L}^{-1}(\mathcal{\widehat V}_{1,k}\mathcal{\widehat V}_{2,k}^T)$. This procedure leads to a lower storage demand of the overall solution process and to less time consuming preconditioning steps. On the other hand, the effectiveness of the  preconditioner in reducing the total iteration count may get weakened, especially for large $\varepsilon_{\mathtt{precond}}$. In the results reported in the following we have always set $\varepsilon_{\mathtt{precond}}=10^{-3}$.

In Table~\ref{tab1.1} we report the results for different values of $n$ and $\nu$.

\begin{table}[!ht]
 \centering
 \caption{Example~\ref{Ex.1}. Results for different values of $n$ and $\nu$.\label{tab1.1}}
 \setlength\tabcolsep{8pt}
\begin{tabular}{|r  r| r r r| r r |r r | }
\hline
  &  &  &  & &\multicolumn{2}{c|}{Memory} & \multicolumn{2}{c|}{Conv. Checks}  \\
  $\nu$ &  $n$ & It. & $\text{rank}(S_1S_2^T)$ & Time (s) & $V_m$ & $Z_m$ &\eqref{upperbound_res2}$/\|C_1C_2^T\|_F$ & Real Res.  \\
 \hline
 
0.5 & 5000 & 8 & 58 & 2.872e1 & 1174  & 915 & 4.078e-7 & 2.974e-7 \\

 & 10000 & 8 & 59 & 8.352e1 & 1543 & 1079 & 4.242e-7 & 3.144e-7 \\

 & 15000 & 8 & 69 & 1.812e2 & 2075 & 1239 & 9.492e-7 & 6.401e-7 \\
\hline
0.1 & 5000 & 15 & 66 & 1.256e2 & 3284 & 1880 & 7.803e-7 & 4.509e-7 \\

 & 10000 & 15 & 71 & 4.687e2 & 4566 & 2364 & 7.798e-7 & 4.497e-7 \\

 & 15000 & 15 & 81 & 1.169e3 & 6152 & 2800 & 8.623e-7 & 4.519e-7 \\
\hline
0.05 & 5000 & 20 & 77  & 4.067e2 & 5957 & 2980 & 8.533e-7 & 2.644e-7 \\

 & 10000 & 20 & 82 & 1.486e3 & 7896 & 3624 & 8.558e-7 & 2.640e-7 \\

 & 15000 & 20 & 88 & 3.467e3 &9867 & 4093  & 8.691e-7 & 2.656e-7 \\
\hline
\end{tabular}
 \end{table}

 We notice that the number of iterations is very robust with respect to the problem dimension $n$, and thus the mesh-size. Unfortunately, this does not lead to a storage demand that is also independent of $n$. The rank of the basis vectors, i.e., the number of columns of the matrices $[\mathcal{V}_{1,1},\ldots,\mathcal{V}_{1,m+1}]$ and 
  $[\mathcal{Z}_{1,1},\ldots,\mathcal{Z}_{1,m}]$ increases with the problem size. This trend is probably inherited from some intrinsic properties of the continuous problem. Indeed, the rank of the computed solution also grows with $n$ suggesting the idea that the rank of the exact solution increases with the problem size as well. Therefore, we are applying low-rank techniques to a problem whose low-rank approximability deteriorates for large $n$ and an increment in the memory requirements of our procedures is thus inevitable. A similar behavior is observed when decreasing the viscosity parameter $\nu$ as well. 
  
  A growth in the rank of the basis vectors determines also a remarkable increment in the computational time as illustrated in Table~\ref{tab1.1}. Indeed, the computational cost of basically all the steps of Algorithm~\ref{LR_FOM}, from the Arnoldi procedure and the low-rank truncations, to the preconditioning phase, depends on the rank of the basis vectors.
  
  We also underline the fact that the true relative residual norm turns out to be always smaller than the normalized computed bound~\eqref{upperbound_res2} validating the reliability of~\eqref{upperbound_res2} as convergence check.
  
  In Figure~\ref{fig:1} (left) we report the normalized bound~\eqref{upperbound_res2} together with the truncation threshold $\varepsilon_{\mathcal{A}}^{(j)}/\|C_1C_2^T\|_F$ for the case $n=5000$ and $\nu=0.5$. We can appreciate how the tolerance for the low-rank truncations increases as the residual norm decreases. As already mentioned, this is a key element to obtain a solution procedure with a feasible storage demand. Moreover,
  in Figure~\ref{fig:1} (right) we document the increment in the rank of the vectors of the preconditioned and unpreconditioned bases as the iterations proceed. We also plot the rank of the unpreconditioned basis we would obtain if no truncations (and no preconditioning steps) were performed, i.e., $4^j$. We can see how we would obtain full-rank basis vectors after very few iterations with consequent impracticable memory requirements of the overall solution process.  
  
   \begin{figure}
  \centering
  \caption{Example~\ref{Ex.1}, $n=5000$, $\nu=0.5$. Left: Normalized bound~\eqref{upperbound_res2} and $\varepsilon_j^{(\mathcal{A})}/\|C_1C_2^T\|_F$ for $j=1,\ldots,9$. Right: Rank of the matrix representing the $j$-th vector of the preconditioned and unpreconditioned basis.} \label{fig:1}
  \begin{minipage}{.5\linewidth}
   	\begin{tikzpicture}
     \begin{semilogyaxis}[width=.95\linewidth, height=.4\textheight,legend pos = south east, xlabel = Iterations, ylabel = Magnitude]
       \addplot+[thick,mark=o] table[x index=0, y index=1]  {data_convdiff.dat};
       \addplot+[thick,mark=square] table[x index=0, y index=2]  {data_convdiff.dat};
       \legend{\eqref{upperbound_res2}$/\|C_1C_2^T\|_F$,$\varepsilon_j^{(\mathcal{A})}/\|C_1C_2^T\|_F$,};
        \end{semilogyaxis}          
   \end{tikzpicture}
   \end{minipage}
   \begin{minipage}{0.48\textwidth} 
   \vspace{0.2cm}
  	\begin{tikzpicture}
    \begin{semilogyaxis}[width=0.95\linewidth, height=.4\textheight, legend pos = north west,
     xlabel = Iterations]
      \addplot+[thick,mark=o] table[x index=0, y index=4]{data_convdiff.dat};
      \addplot+[thick,mark=square] table[x index=0, y index=5]{data_convdiff.dat};
      \addplot+[thick,mark=diamond, color=black] table[x index=0, y index=6]{data_convdiff.dat};
      \legend{$\text{rank}(\mathcal{V}_{1,j})$,$\text{rank}(\mathcal{Z}_{1,j})$,$4^j$};
       \end{semilogyaxis}          
  \end{tikzpicture}
   \end{minipage}
\end{figure}
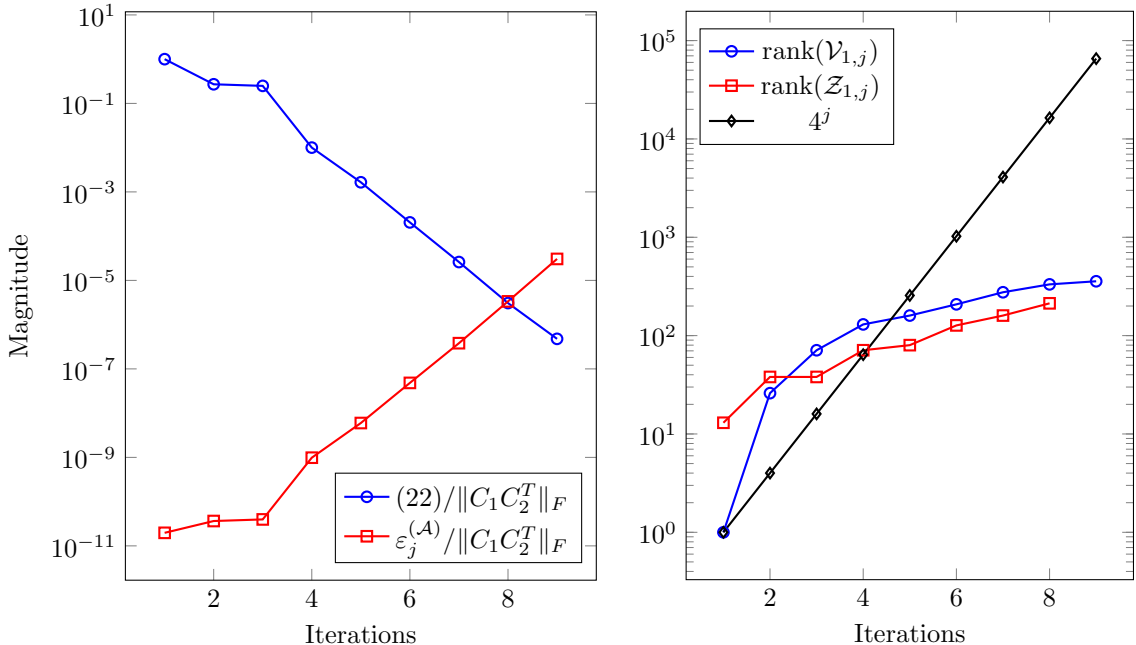
  
  To conclude, in Figure~\ref{Ex.1_Fig.2}, we report the inner product between the last basis vector we have computed and the previous ones, namely we report $\langle \mathcal{V}_{1,9}\mathcal{V}_{2,9}^T,\mathcal{V}_{1,j}\mathcal{V}_{2,j}^T\rangle_F$ for $j=1,\ldots,9$. This numerically confirms that the strategy illustrated in section~\ref{Structured perturbations of the basis} is able to maintain the orthogonality of the basis.
  
  \begin{figure}
  \centering    
  \caption{Example~\ref{Ex.1}, $n=5000$, $\nu=0.5$. $\langle \mathcal{V}_{1,9}\mathcal{V}_{2,9}^T,\mathcal{V}_{1,j}\mathcal{V}_{2,j}^T\rangle_F$ for $j=1,\ldots,9$. {\tt eps} denotes machine precision.}
  \label{Ex.1_Fig.2}
  	\begin{tikzpicture}
    \begin{semilogyaxis}[width=0.8\linewidth, height=.27\textheight,
      legend pos = north west,
      xlabel = Iterations, ylabel = Magnitude ]
      \addplot+[thick] table[x index=0, y index=3]  {data_convdiff.dat};
       \addplot+ [color=black,thick,dashed, mark=none]table[x index=0, y index=7]  {data_convdiff.dat};
        \legend{$\langle\mathcal{V}_{1,9}\mathcal{V}_{2,9}^T\text{,}\mathcal{V}_{1,j}\mathcal{V}_{2,j}^T\rangle_F$,{\tt eps}};
    \end{semilogyaxis}          
  \end{tikzpicture}
  
\end{figure}
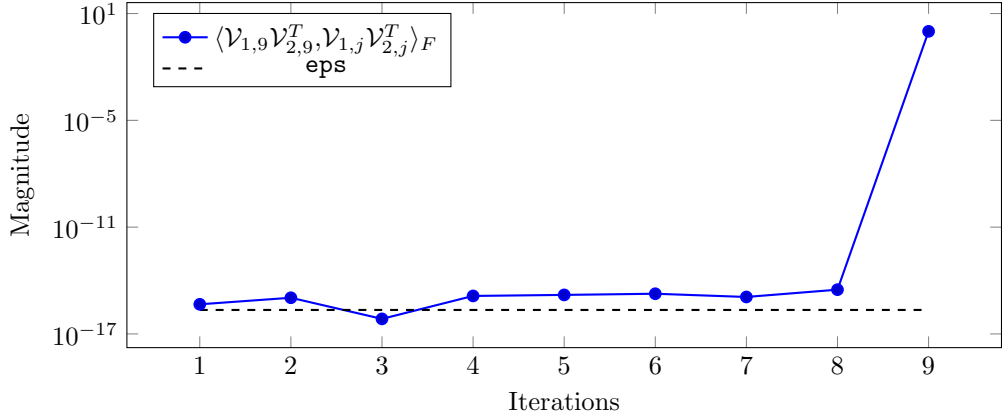

}
\end{num_example}

\begin{num_example}\label{Ex.2}
{\rm
In the second example we consider the algebraic problem stemming from the discretization of stochastic steady-state diffusion equations. In particular, given a sufficiently regular spatial domain $D$ and a sample space $\Omega$ associated with the probability space $(\Omega,\mathcal{F},\mathbb{P})$, we seek an approximation to the function $u:D\times\Omega\rightarrow \mathbb{R}$ which is such that $\mathbb{P}$-almost surely
\begin{equation}\label{Ex.2_eq}
 \begin{array}{rlll}
         -\nabla\cdot(a(x,\omega)\nabla u(x,\omega))&=&f(x),&  \text{in }D,\\
         u(x,\omega)&=&0,&\text{on }\partial D.\\
        \end{array}
\end{equation}
We consider $D=[-1,1]^2$ and we suppose $a$ to be a random field of the form 
$$a(x,\omega)=a_0(x)+\sum_{i=1}^ra_i(x)\sigma_i(\omega),$$
where $\sigma_i:\Omega\rightarrow\Gamma_i\subset\mathbb{R}$ are real-valued independent random variables (RVs).

 In our case, $a(x,\omega)$ is a truncated Karhunen-Lo\`eve (KL) expansion 
 \begin{equation}\label{KL}
  a(x,\omega)=\mu(x)+\theta\sum_{i=1}^r\sqrt{\lambda_i}\phi_i(x)\sigma_i(\omega).
 \end{equation}
See, e.g.,~\cite{Lord2014} for more details.
 
The stochastic Galerkin method discussed in, e.g., \cite{Babuska2004,Deb2001,Powell2009,Ullmann2010,Powell2017}, leads to a discrete problem that can be written as a matrix equation of the form 
\begin{equation}\label{eq_stochastic}
 K_0XG_0^T+\sum_{i=1}^rK_iXG_i^T=f_0g_0^T, 
\end{equation}
where $K_i\in\mathbb{R}^{n_x\times n_x}$, $G_i\in\mathbb{R}^{n_\sigma\times n_\sigma}$, and $f_0
\in\mathbb{R}^{n_x}$, $g_0
\in\mathbb{R}^{n_\sigma}$. See, e.g.,~\cite{Powell2009,Powell2017}.


We solve equation~\eqref{eq_stochastic} by LR-GMRES and the following operators
$$
\begin{array}{rrll}
\mathcal{P}_{\mathtt{mean}}:& \mathbb{R}^{n_x\times n_\sigma}&\rightarrow& \mathbb{R}^{n_x\times n_\sigma}\\
&X&\mapsto&K_0X,
\end{array}\quad
\begin{array}{rrll}
\mathcal{P}_{\mathtt{Ullmann}}:& \mathbb{R}^{n_x\times n_\sigma}&\rightarrow& \mathbb{R}^{n_x\times n_\sigma}\\
&X&\mapsto&K_0X\widebar G^T,\quad \widebar G:=\sum_{i=0}^r\frac{\text{trace}(K_i^TK_0)}{\text{trace}(K_0^TK_0)}G_i,
\end{array}
$$
are selected as preconditioners. $\mathcal{P}_{\mathtt{mean}}$ is usually referred to as mean-based preconditioner, see, e.g.,~\cite{Powell2017, Powell2009} and the references therein, while Ullmann proposed $\mathcal{P}_{\mathtt{Ullmann}}$ in~\cite{Ullmann2010}.

Both $\mathcal{P}_{\mathtt{mean}}$ and $\mathcal{P}_{\mathtt{Ullmann}}$ are very well-suited for our framework as their application amount to the solution of a couple of linear systems so that the rank of the current basis vector does not increase. See the discussion in section~\ref{Preconditioning}. Moreover, supposing that these linear systems can be solved exactly by, e.g., a sparse direct solver, there is no need to employ flexible GMRES so that only one basis has to be stored. In particular, in all our tests, we precompute once and for all the LU factors of the matrices\footnote{The computational time of such decompositions is always included in the reported results.} which define the selected preconditioner so that only triangular systems are solved during the LR-GMRES iterations.

We 
generate instances of~\eqref{eq_stochastic} with the help of the S-IFISS\footnote{Available at \tt{https://personalpages.manchester.ac.uk/staff/david.silvester/ifiss/sifiss.html}} package version 1.04; see \cite{Silvester2015}. 
The S-IFISS routine \texttt{stoch\_diff\_testproblem\_pc} is executed to generate two instances of~\eqref{eq_stochastic}. The first equation ({\tt Data 1}) is obtained by using a spatial discretization with $2^7$ points in each dimension, $r=2$ RVs in~\eqref{KL} which are approximated by polynomial chaos expansions of length $\ell=100$ leading to
$n_x=16129$, $n_\sigma=5151$, and $r+1=3$. 
%
The second instance ({\tt Data 2}) was generated with $2^8$ grid points, $r=5$, and chaos expansions of length $\ell=10$ resulting in $n_x=65025$, $n_\sigma=3003$, and $r+1=6$.


 \begin{table}[!ht]
 \centering
 \caption{Example~\ref{Ex.2}. Results of preconditioned LR-GMRES applied to different test problems. {\tt Data 1}: $n_x=16129$, $n_\sigma=5151$, $r+1=3$, {\tt Data 2}: $n_x=65025$, $n_\sigma=3003$, $r+1=6$.} \label{tab2.1}
 \vspace{0.3cm}
 
  \setlength\tabcolsep{10pt}
 \begin{tabular}{| r | r r r | r r | r |}
 \hline
 & & & & \multicolumn{2}{c|}{Conv. Checks} & \\
  Prec.  & Its  & 
  rank($S_1S_2^T$) & Mem. &~\eqref{upperbound_res2}$/\|C_1C_2^T\|_F$ & Real Res. & Time (s) \\
 
 \hline
 \multicolumn{7}{|c|}{{\tt Data 1}}\\
\hline  
 $\mathcal{P}_{\mathtt{Ullmann}}$ &  9  & 44 & 220 & 3.703e-7 &  3.551e-7& 1.204e1\\
 $\mathcal{P}_{\mathtt{mean}}$ &  13  & 64 & 507 & 7.636e-7 & 7.369e-7 & 2.521e1\\
 \hline
 \multicolumn{7}{|c|}{{\tt Data 2}}\\
\hline  
 $\mathcal{P}_{\mathtt{Ullmann}}$ &  15  & 791 & 10266 & 5.611e-7 & 5.359e-7&  8.847e4\\
 $\mathcal{P}_{\mathtt{mean}}$ &  20  & 806 & 14912 & 8.118e-7 & 7.703e-7& 1.626e5\\
 \hline 
\end{tabular}
 \end{table}
 
Table~\ref{tab2.1} summarizes the results and apparently problem {\tt Data 2} is much more challenging than {\tt Data 1}. This is meanly due to the number of terms in~\eqref{eq_stochastic}. Indeed, the effectiveness of the preconditioners may deteriorate as $r$ increases even though the actual capability of $\mathcal{P}_{\mathtt{mean}}$ and $\mathcal{P}_{\mathtt{Ullmann}}$ in reducing the iteration count is related to the coefficients of the KL expansion~\eqref{KL}. See, e.g.,~\cite[Theorem 3.8]{Powell2009} and~\cite[Corollary 5.4]{Ullmann2010}.
Moreover, $r+1$ terms are involved in the products in line~\ref{alg_line_product} of Algorithm~\ref{LR_FOM} and a sizable $r$ leads, in general, to a faster growth in the rank of the basis vectors so that a larger number of columns are retained during the truncation step in line~\ref{algo_firsttrunc}. As a result, the computational cost of our iterative scheme increases as well leading to a rather time consuming routine.

If the discrete operator stemming from the discretization of~\eqref{Ex.2_eq} is well posed, then it is also symmetric positive definite and the CG method can be employed in the solution process. See, e.g.,~\cite[Section 3]{Powell2009}. We thus try to apply the (preconditioned) low-rank variant of CG (LR-CG) to the matrix equation~\eqref{eq_stochastic}. To this end, we adopt the LR-CG implementation proposed in~\cite{Benner2013}. With the notation of~\cite[Algorithm 1]{Benner2013} we truncate all the iterates $X_{k+1}$, $R_{k+1}$, $P_{k+1}$ and $Q_{k+1}$. In particular, the threshold for the truncation of $X_{k+1}$ is set to 
$10^{-12}$ while the value on the right-hand side of~\eqref{eq_CERFACS} is used at the $k$-th LR-CG iteration for the low-rank truncation of all the other iterates. We want to point out that in the LR-CG implementation proposed in~\cite{Benner2013}, the residual matrix $R_{k+1}$ is explicitly calculated by means of the current approximate solution $X_{k+1}$. We compute the residual norm before truncating $R_{k+1}$ so that what we are actually evaluating is the true residual norm and not an upper bound thereof.

The results are collected in Table~\ref{tab2.2} where the column ``Mem.'' reports the maximum number of columns
that had to be stored in the low-rank factors of all the iterates $X_{k+1}$, $R_{k+1}$, $P_{k+1}$, $Q_{k+1}$, and $Z_{k+1}$.

{\renewcommand{\arraystretch}{1.2}%
 \begin{table}[!ht]
 \centering
 \caption{Example~\ref{Ex.2}. Results of preconditioned LR-CG applied to different test problems. {\tt Data 1}: $n_x=16129$, $n_\sigma=5151$, $r+1=3$, {\tt Data 2}: $n_x=65025$, $n_\sigma=3003$, $r+1=6$.} \label{tab2.2}
 \vspace{0.3cm}
 
  \setlength\tabcolsep{10pt}
 \begin{tabular}{| r | r r r  r  r |}
 \hline
  Prec. & Its  & 
  rank($S_1S_2^T$) & Mem.  & Real Res. & Time (s) \\
 
 \hline
 \multicolumn{6}{|c|}{{\tt Data 1}}\\
\hline  
 $\mathcal{P}_{\mathtt{Ullmann}}$ &  11  & 41 & 234 & 9.517e-7 & 1.921e0\\
 $\mathcal{P}_{\mathtt{mean}}$ &  19  & 52 & 288 & 9.629e-7 & 3.369e0\\
 \hline
 \multicolumn{6}{|c|}{{\tt Data 2}}\\
\hline  
 $\mathcal{P}_{\mathtt{Ullmann}}$ &  46  & 483 & 4404 & 9.976e-7 &  9.642e2\\
 $\mathcal{P}_{\mathtt{mean}}$ &  67  & 450 & 4096 & 9.981e-7 & 1.325e3\\
 \hline 
\end{tabular}
 \end{table}
}

 Except for {\tt Data 1} with $\mathcal{P}_{\mathtt{Ullmann}}$ as a preconditioner where LR-GMRES and LR-CG show similar results especially in terms of memory requirements, LR-CG allows for a much lower storage demand with a consequent reduction in the total computational efforts while achieving the prescribed accuracy.
 However, for {\tt Data 2}, LR-CG requires a rather large number of iterations to converge regardless of the adopted preconditioner. This is due to a very small reduction of the residual norm, almost a stagnation, from one iteration to the following one we observe in the final stage of the algorithm. See Figure~\ref{fig:2} (left). This issue may be fixed by employing a more robust, possibly more conservative, threshold for the low-rank truncations. Alternatively, a condition of the form $\|X_k-X_{k+1}\|_F\leq \varepsilon$ can be included in the convergence check as proposed in~\cite{Powell2017}.
 
 We conclude by mentioning a somehow surprising behavior of LR-CG. In particular, in the first iterations the rank of all the iterates increases as expected, while it starts decreasing from a certain $\widebar k$ on until it reaches an almost constant value. See Figure~\ref{fig:2} (right). This trend allows for a feasible storage demand also when many iterations are performed as for {\tt Data 2}.
 We think that such a phenomenon deserves further studies.
  
   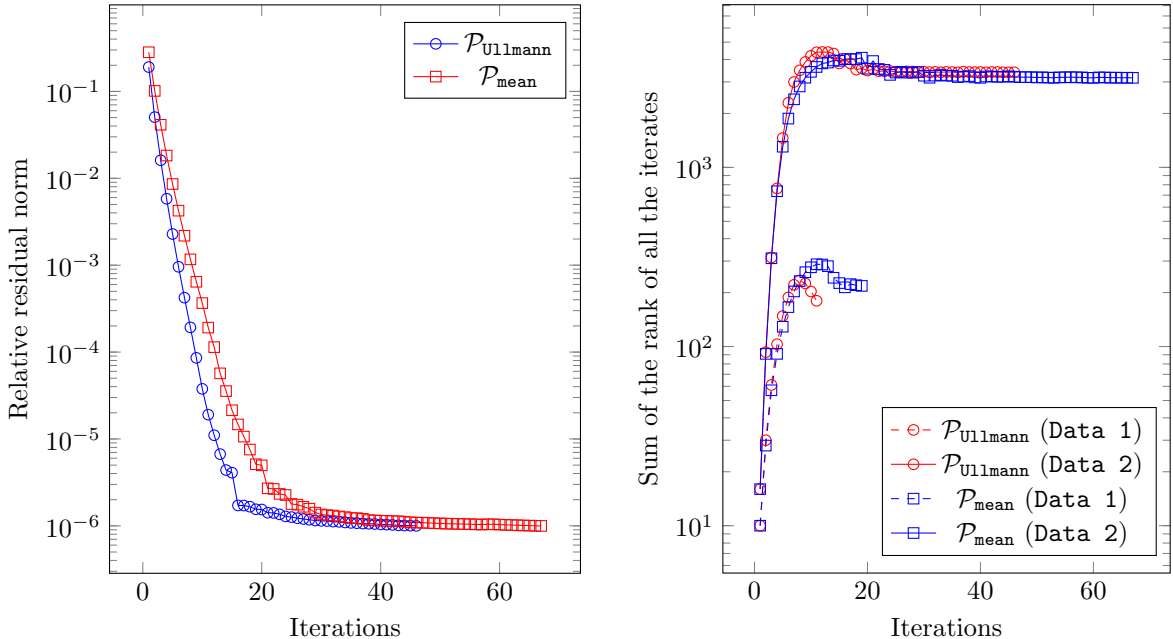
\begin{figure}
  \centering
  \caption{Example~\ref{Ex.2}. Left: LR-CG relative residual norm for {\tt Data 2}. Right: Sum of the rank of all the LR-CG iterates $X_{k+1}$, $R_{k+1}$, $P_{k+1}$, $Q_{k+1}$, and $Z_{k+1}$ as the iterations proceed.} \label{fig:2}
  \vspace{0.2cm}
  
  \begin{minipage}{.5\linewidth}
   	\begin{tikzpicture}
     \begin{semilogyaxis}[width=.95\linewidth, height=.4\textheight,legend pos = north east, xlabel = Iterations, ylabel = Relative residual norm]
       \addplot+[mark=o] table[x index=0, y index=1]  {data_stochastlarge_CG_Ullmann.dat};
       \addplot+[mark=square] table[x index=0, y index=1]  {data_stochastlarge_CG_meanbased.dat};
       \legend{$\mathcal{P}_{\mathtt{Ullmann}}$,$\mathcal{P}_{\mathtt{mean}}$};
        \end{semilogyaxis}          
   \end{tikzpicture}
   \end{minipage}
   \begin{minipage}{0.48\textwidth} 
  	
  	\begin{tikzpicture}
    \begin{semilogyaxis}[width=0.95\linewidth, height=.4\textheight, legend pos = south east,
     xlabel = Iterations, ylabel = Sum of the rank of all the iterates]
      \addplot+[ mark= o,dashed, color = red, mark options={solid}] table[x index=0, y index=2]{data_stochastsmall_CG_Ullmann.dat};      
      \addplot+[ mark=o, color = red] table[x index=0, y index=2]{data_stochastlarge_CG_Ullmann.dat};
      \addplot+[ mark= square,dashed, color = blue, mark options={solid}] table[x index=0, y index=2]{data_stochastsmall_CG_meanbased.dat};
      \addplot+[ mark=square, color = blue] table[x index=0, y index=2]{data_stochastlarge_CG_meanbased.dat};
      
 \legend{$\mathcal{P}_{\mathtt{Ullmann}}$ ({\tt Data 1}), $\mathcal{P}_{\mathtt{Ullmann}}$ ({\tt Data 2}),$\mathcal{P}_{\mathtt{mean}}$ ({\tt Data 1}), $\mathcal{P}_{\mathtt{mean}}$ ({\tt Data 2})};
 \end{semilogyaxis}          
  \end{tikzpicture}
   \end{minipage}
\end{figure}

}
\end{num_example}

\section{Conclusions}\label{Conclusions}
Low-rank Krylov methods are one of the few options for solving general linear matrix equations of the form~\eqref{eq_main}, especially for large problem dimensions. An important step of these procedures consist in truncating the rank of the basis vectors to maintain a feasible storage demand of the overall solution process. In principle, such truncations can severely impact on the converge of the adopted Krylov routine.

In this paper we have shown how to perform the low-rank truncations in order to maintain the convergence of the selected Krylov procedure.
In particular, our analysis points out that not only the thresholds employed for the truncations are important, but also the actual procedure adopted for the low-rank truncations plays a fundamental role. Indeed, such a routine must be able to preserve the orthogonality of the computed basis.

\section*{Acknowledgments}
 The first author is a member of the Italian INdAM Research group GNCS. Part of this work was carried out while the second author was affiliated with the Max Planck Institute for Dynamics of Complex Technical Systems in Magdeburg, Germany.

\bibliography{LowRankKrylov}

\end{document}